\definecolor{violet}{rgb}{0.0,0.2,0.7}
\definecolor{rouge2}{rgb}{0.8,0.0,0.2}
\patchcmd{\thebibliography}{*}{}{}{}
\pretocmd\thebibliography{\csname c@secnumdepth\endcsname=-2 }{}{}
\numberwithin{equation}{section}
\renewcommand{\P}{\mathbb{P}}
\newcommand{\R}{\mathbb{R}}
\newcommand{\CC}{\mathbb{C}}
\newcommand{\Q}{\mathbb{Q}}
\renewcommand{\d}{\partial}
\newcommand{\vp}{\varphi}
\renewcommand{\O}{\mathcal{O}}
\newcommand{\Ox}{\mathcal{O}_{X}}
\newcommand{\ep}{\varepsilon}
\renewcommand{\epsilon}{\varepsilon}
\newcommand{\la}{\langle}
\newcommand{\ra}{\rangle}
\renewcommand{\ge}{\geqslant}
\renewcommand{\le}{\leqslant}
\renewcommand{\leq}{\leqslant}
\renewcommand{\geq}{\geqslant}
\newcommand{\Ric}{\mathrm{Ric} \,}
\newcommand{\om}{\omega}
\newcommand{\omi}{\omega_{\infty}}
\newcommand{\ddc}{dd^c}
\newcommand{\xreg}{X_{\rm reg}}
\newcommand{\bi}{\beta_{\infty}}
\newcommand{\E}{\mathscr{E}}
\newcommand{\F}{\mathscr{F}}
\newcommand{\omvp}{\omega_{\varphi}}
\newcommand{\Supp}{\mathrm {Supp}}
\newcommand{\tr}{\mathrm{tr}}
\newcommand{\pp}{\psi^{+}}
\newcommand{\psm}{\psi^{-}}
\newcommand{\ppm}{\psi^{\pm}}
\newcommand{\Xnu}{X^{\nu}}
\newcommand{\Dnu}{D^{\nu}}
\newcommand{\Tx}{\mathscr{T}_X}
\newcommand{\Txd}{\mathscr{T}_X(-D)}
\newcommand{\txdf}{\mathscr{T}_X(-D)}
\newcommand{\Txdf}{T_X(-D)}
\renewcommand{\F}{\mathscr{F}}
\newcommand{\G}{\mathscr{G}}
\newcommand{\omx}{\om_{X}}
\newcommand{\omy}{\om_{Y}}
\newcommand{\Hom}{\mathrm{Hom}_B}
\newcommand{\T}{\mathcal T}
\newcommand{\Lm}{L^{\otimes m}}
\newcommand{\Km}{K_X^{\otimes m}}
\newcommand{\rk}{\mathrm{rk}}
\newcommand{\hr}{h^{\wedge r}}
\newcommand{\Exc}{\mathrm{Exc}}
\newcommand{\pr}{\mathrm{pr}}
\newtheorem*{thma}{Theorem A}
\newtheorem*{thmb}{Theorem B}
\newtheorem*{KH}{Kobayashi-Hitchin correspondence}
\newtheorem*{thme}{Corollary}
\begin{document}

\title{Semi-stability of the tangent sheaf of singular varieties}

\date{\today}
\author{Henri Guenancia}
\address{Department of Mathematics \\
Stony Brook University \\
 Stony Brook, NY 11794-3651 USA}
\email{guenancia@math.sunysb.edu}
\urladdr{http://www.math.sunysb.edu/~guenancia}

\begin{abstract}
The main goal of this paper is to prove the polystability of the logarithmic tangent sheaf $\mathscr T_X(-D)$ of a log canonical pair $(X,D)$ whose canonical bundle $K_X+D$ is ample, generalizing in a significant way a theorem of Enoki. We apply this result and the techniques involved in its proof to get a version of this theorem for stable varieties (the higher dimensional analogue of Deligne-Mumford's stable curves) and to prove the polystability with respect to any polarization of the tangent sheaf of a singular Calabi-Yau variety.
\end{abstract}

\maketitle
\tableofcontents
\section*{Introduction}
\subsection*{Semi-stability}
This paper deals with the notion of slope semi-stability for coherent sheaves on singular varieties using a differential-geometric approach. This notion was first introduced by Mumford \cite{Mum63} in his attempt to construct bounded families of vector bundles over a curve. Recall that a vector bundle $E$ over a complex projective curve $C$ is said to be semistable (resp. stable) if for every proper subbundle $F$ of $E$, we have
\[\frac{\mathrm{deg}(F)}{\rk \,F}\le \frac{\mathrm{deg}(E)}{\rk \,E} \quad  \left(\mathrm{resp.} \,\, \frac{\mathrm{deg}(F)}{\rk \,F}<\frac{\mathrm{deg}(E)}{\rk \,E} \right)\]
where the degree of $F$ is $\int_C c_1(F)$; the quantity $\mathrm{deg}(F)/\rk\, F$ is called the slope of $F$ and denoted $\mu(F)$. When one tries to generalize this definition to higher dimensional varieties one first needs an ample line bundle $H$ (called a polarization) to define a degree and thus a slope: $\deg_H(F)=c_1(F) \cdotp H^{n-1}$ and $\mu_H(F)= \mathrm{deg}_H(F)/\rk \, F $. And more importanty, in order to keep the good moduli properties shared by stable vector bundles over curves, one needs to ask that not only all the subbundles of $E$ have a smaller slope than the one of $E$, but also all its proper coherent \textit{subsheaves}, cf Definition \ref{defi:slope}.

\subsection*{The Kobayashi-Hitchin correspondence}
One of the major developments regarding stable vector bundles is the celebrated Kobayashi-Hitchin correspondence, relating the stability of a vector bundle $E$ (an algebro-geometric property) to the existence of a special hermitian metric on $E$ (a differential-geometric property): 

\begin{KH}
Let $E$ be an holomorphic vector bundle on a compact Kähler manifold $(X, \om)$. Then $E$ is polystable with respect to $\om$ if and only if $E$ admits a Hermite-Einstein metric with respect to $\om$.
\end{KH}

Recall that a Hermite-Einstein metric $h$ on $E$ is a Hermite-Einstein metric with respect to a Kähler form $\om$ if its Chern curvature tensor $\Theta_h(E)\in \mathscr C^{\infty}(X, \Omega^{1,1}\otimes \mathrm{End}(E))$ satisfies $\tr_{\om}( \Theta_h(E)) = \mu  \mathrm{Id}_E$ for some constant $\mu \in \R$. Here, polystability means that $E$ is semistable and is the sum of stable subsheaves (that automatically have same slope).

\noindent
This correspondence is due to Kobayashi-Lübke \cite{Kob82, Lub} in the direction "Hermite-Einstein $\Rightarrow$ stable" and to Donaldson for Riemann surfaces \cite{Don83}, algebraic surfaces \cite{Don85} and manifolds \cite{Don87} and to Uhlenbeck-Yau \cite{UY, UY2} for Kähler manifolds. 

The case where $E=T_X$ is the tangent bundle of the manifold is of particular importance because due to the symmetry of the Riemann curvature tensor, if $\om$ is a Kähler form and $h$ is the induced hermitian metric on $T_X$, then $(T_X, h)$ is Hermite-Einstein with respect to $\om$ if and only if $\om$ is a Kähler-Einstein metric, i.e. $\Ric \om = \mu \om$ for some constant $\mu \in \R$. For a lot of reasons, the Kähler-Einstein equation is in general easier to study/solve than the Hermite-Einstein equation, and currently we dispose of a lot of existence results and tools in the Kähler-Einstein theory. For example, when $K_X$ is ample (resp. numerically trivial), the celebrated theorems of Aubin-Yau and Yau \cite{Aubin, Yau78} provide a unique negatively curved Kähler-Einstein metric (resp. Ricci-flat metric) on $X$ living in $c_1(K_X)$ (resp. in any Kähler cohomology class). Bringing these theorem together with the Kobayashi-Hitchin correspondence, we see that:

\begin{thme}
Let $X$ be a compact Kähler manifold. Then
\begin{enumerate}
\item[$\bullet$] If $K_X$ is ample, then $T_X$ is polystable with respect to $K_X$.
\item[$\bullet$] If $K_X$ is numerically trivial, then $T_X$ is polystable with respect to any Kähler class.
\end{enumerate}
\end{thme}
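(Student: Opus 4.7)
The plan is a direct combination of the Aubin-Yau and Yau existence theorems with the Kobayashi-Hitchin correspondence recalled above, applied to the tangent bundle $E=T_X$. The key point, already highlighted in the excerpt, is that for a Hermitian metric $h$ on $T_X$ induced by a Kähler form $\om$, the Hermite-Einstein equation $\trom \Theta_h(T_X)=\mu\, \mathrm{Id}_{T_X}$ is equivalent, by the symmetry of the Riemann curvature tensor, to the Einstein condition $\Ric \om = \mu\, \om$. Once a Kähler-Einstein metric has been produced in the relevant cohomology class, polystability of $T_X$ will be immediate from the Kobayashi-Hitchin correspondence.

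For the first item, assume $K_X$ is ample. Aubin-Yau provides a unique Kähler-Einstein metric $\omke \in c_1(K_X)$ with $\Ric \omke = -\omke$, so the induced Hermitian metric on $T_X$ is Hermite-Einstein with factor $\mu=-1$ with respect to $\omke$. By Kobayashi-Hitchin, $T_X$ is polystable with respect to the class $[\omke] = c_1(K_X)$; since the slope of any coherent sheaf $\F$ is computed via $c_1(\F)\cdot [\omke]^{n-1} = c_1(\F)\cdot c_1(K_X)^{n-1}$, this is exactly polystability with respect to the polarization $K_X$. For the second item, assume $K_X$ is numerically trivial and fix an arbitrary Kähler class $\alpha$ on $X$. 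Yau's solution of the Calabi conjecture produces a unique Ricci-flat Kähler metric $\om \in \alpha$, so the induced metric on $T_X$ is Hermite-Einstein with $\mu=0$. Kobayashi-Hitchin then yields polystability of $T_X$ with respect to $\om$, hence with respect to $\alpha$.

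Since the argument is a straightforward juxtaposition of major known results, there is no real technical obstacle at this stage; the only mild subtlety is that in the second item $\alpha$ is an arbitrary Kähler class rather than the first Chern class of an ample line bundle, so one must work with the transcendental notion of slope $\mu_{\alpha}(\F)=\alpha^{n-1}\cdot c_1(\F)/\rk \F$, which reduces to the algebraic slope whenever $\alpha$ is the class of a polarization. This clean argument is the prototype that the rest of the paper will attempt to extend to singular settings (log canonical, stable, and Calabi-Yau varieties), where the genuine difficulty is that neither the PDE side (existence and regularity of singular Kähler-Einstein metrics on $\xreg$) nor the Kobayashi-Hitchin side is available off the shelf.
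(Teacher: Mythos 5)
Your argument is exactly the one the paper uses: it derives this corollary in a single sentence by combining the Aubin-Yau and Yau existence theorems with the Kobayashi-Hitchin correspondence, using the observation that the induced metric on $T_X$ is Hermite-Einstein precisely when $\om$ is Kähler-Einstein. The proposal is correct and adds only the (accurate) remark about the transcendental slope in the Ricci-flat case.
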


\subsection*{Singular varieties and stability} In the present paper, we will strive to extend the above result in a singular setting pushing further the very elegant approach of I. Enoki \cite{Enoki} who essentially proved the semistability of the tangent sheaf of $X$ in both cases above assuming merely that $X$ has canonical singularities. His strategy is to use approximate Kähler-Einstein metric on a resolution $X'$. By considering them as approximate Hermite-Einstein metrics on $T_{X'}$, he obtains an "approximate slope inequality". Thanks to various estimates (due to Yau) about Monge-Ampère equations, he manages to control the error term in the inequality and to prove that it converges to zero at the end of the approximation process.

We will generalize Enoki's Theorem from three different perspectives: we will merely require that $X$ has log canonical singularities, we will allow a boundary divisor and work in the the setting of log pairs, and finally we will show the polystability on top of the semistability:

\begin{thma}
Let $X$ be a compact Kähler space and $D$ be a reduced Weil divisor. 
\begin{enumerate}
\item[$\bullet$] If $(X,D)$ has log canonical singularities and $K_X+D$ is ample, then the logarithmic tangent sheaf $\mathscr T_X(-D)$ is polystable with respect to $K_X+D$.
\item[$\bullet$] If $X$ has klt singularities and $K_X$ is numerically trivial, then $T_X$ is polystable with respect to any Kähler class.
\end{enumerate}
\end{thma}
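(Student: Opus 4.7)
The plan is to implement Enoki's strategy in a setting that accommodates both a boundary divisor and log canonical singularities, and to sharpen the resulting semistability into polystability. First, I take a log resolution $\pi: \wX \to X$ such that $\pi^{-1}(\sd \cup \xsing) = \widetilde D + \Exc(\pi)$ is simple normal crossings; I decompose the exceptional divisor according to log discrepancies, separating the klt part (discrepancies $> -1$) from the strictly log canonical part (discrepancies equal to $-1$). On $\wX$ I construct a family of smooth Kähler metrics $\ome$ whose classes converge to $\pi^*(K_X+D)$ and which satisfy an approximate negatively curved twisted Kähler-Einstein equation: conical along $\widetilde D$ and along the klt exceptional divisors, cusp-type along the strictly log canonical part. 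The existence of such $\ome$ follows from the theory of singular KE metrics on log canonical pairs via regularised Monge-Ampère equations; what is really needed for the sequel are uniform $C^0$ and Laplacian-type estimates on compact subsets of $\wX$ away from $\pi^{-1}(\sd \cup \xsing)$.

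Next, $\ome$ induces a Hermite-Einstein-type metric $h_\ep$ on a suitable logarithmic twist $\G$ of $\mathscr T_{\wX}$ by $\widetilde D$ and $\Exc(\pi)$. Given a coherent subsheaf $\F \subset \Txd$, its saturated pullback injects into $\G$ with some second fundamental form $\be$, and the Kobayashi-Lübke integral identity applied to $h_\ep$ yields an approximate slope inequality
\[
\mu_{\ome}(\pi^* \F) \;\le\; \mu_{\ome}(\G) \;-\; \frac{c}{\rk \F}\int_{\wX} |\be|^2_{h_\ep,\ome}\, \ome^n \;+\; R_\ep,
\]
where $R_\ep$ collects the error arising from the non-exact Einstein condition and from the discrepancy between $[\ome]$ and $\pi^*(K_X+D)$. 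Semistability of $\Txd$ then follows once one proves that, as $\ep \to 0$, both sides converge to their natural log canonical analogues, so that any destabilising $\F$ would contradict the limit inequality.

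The main obstacle is precisely this passage to the limit: the degeneration of $\ome$ near $\widetilde D \cup \Exc(\pi)$, and especially the cusp behaviour along the strictly log canonical strata, must not spoil the integrability needed to control both the slope integrals and the error term. This requires weighted $L^p$ and Laplacian estimates on the Monge-Ampère potentials, together with a careful approximation scheme, and it is at this step that the log canonicity of $(X,D)$ is used crucially. Polystability is then extracted from the equality case: for a saturated $\F$ realising equality of slopes, the inequality above forces $\|\be\|_{L^2} \to 0$ in the limit, producing a holomorphic orthogonal splitting of $\G$ on the regular locus which, by Hartogs and reflexivity, extends to an honest direct-sum decomposition of $\Txd$ on $X$. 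The second statement is handled along the same lines, replacing the twisted Kähler-Einstein equation by a family of Monge-Ampère equations whose solutions approximate the Eyssidieux-Guedj-Zeriahi Ricci-flat metric on the klt Calabi-Yau variety $X$ in the chosen Kähler polarisation.
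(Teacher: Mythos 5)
Your outline reproduces the paper's overall strategy (Enoki's method run on a log resolution, approximate Kähler--Einstein metrics, the second fundamental form identity, and the equality case for polystability), but it silently skips the two steps that the paper identifies as the genuinely new difficulties, and as written the argument would not close. First, your ``approximate slope inequality'' compares integrals of curvature forms of the metric $h_\ep$ induced on $\det$ of the subsheaf, but it is not justified that these integrals compute the cohomological slopes. The induced metric is singular both along the cusp divisor and along the degeneracy locus $W$ of the sheaf inclusion, and the equality
\[\int_{\wX\setminus(W\cup D)}c_1(F,h_\ep)\wedge\ome^{n-1}=c_1(\F)\cdotp[\ome]^{n-1}\]
is exactly the content of the paper's Proposition \ref{slope1}, whose proof requires resolving the ideal generated by the induced section of $\Lambda^r T_{\wX}(-D)\otimes(\det\F)^{-1}$, a lower bound $\ddc\log|\xi|^2_{h_\ep}\ge -C\ome$ coming from the bounded geometry of the cusp metric, and a finite-energy argument in the sense of Guedj--Zeriahi. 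Without this, the Kobayashi--L\"ubke identity gives an inequality between analytic quantities that you cannot yet identify with $\mu(\pi^*\F)$ and $\mu(\G)$. Relatedly, the error term $R_\ep$ contains a contribution $\sum\sharp\theta_{i,\ep}$ from the regularized exceptional classes whose ``bad-sign'' part is handled in the paper by a specific decomposition $\theta=a(\beta+\gamma)$ with $\beta\ge 0$ exploiting that $\beta+\gamma$ lies in a $\pi$-exceptional class, together with the key integral estimate $\int\frac{\ep^2}{|s|^2+\ep^2}\om_A\wedge\ome^{n-1}\to0$ (Lemma \ref{lem2}), which itself rests on the Laplacian estimate of Proposition \ref{prop:est}; ``weighted $L^p$ and Laplacian estimates'' is the right instinct but is not yet an argument.

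Second, in the polystability step the vanishing of the limiting second fundamental form only yields a splitting on the complement of $W\cup D$, and $D$ has \emph{codimension one}; ``Hartogs and reflexivity'' extend across codimension $\ge 2$ only, so they do not get you across $D$. The paper needs three further inputs here: smooth convergence of the approximate metrics on compact subsets of the good locus to a limit $\omi$ (from \cite{BG}), the fact that $\omi$ is uniformly equivalent to a cusp metric along $D$ (from \cite{GW}), and then an explicit computation in the logarithmic frame $z_i\,\d/\d z_i$ showing that the matrix entries of the bounded projection operator are locally $L^2$, hence extend holomorphically across $D$; only after that do reflexivity arguments finish the job. A smaller point: since the boundary is reduced, the natural approximate metrics are cuspidal (not conical) along the strict transform of $D$ --- the conical-angle-to-zero route is the alternative of \cite{CP2}, and your hybrid of cone along $\widetilde D$ with cusps along the strictly log canonical exceptional part would require its own existence and estimate theory that you do not supply.
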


Compared to the proof in the case of canonical singularities, a lot of serious new difficulties pop up in this general setting, mainly because the presence of boundary divisor $D$ requires to work with cuspidal metrics instead of smooth metrics. Also passing from canonical to log canonical singularities depends upon a finer analysis of the regularity for solutions of degenerate Monge-Ampère equations, which we develop in \S \ref{sec:lap}. We outlined the new issues we had to overcome in \S \ref{statement}.

We should mention that the second item of Theorem A is closely linked with the main result of \cite{GKP} which describes the tangent sheaf of varieties with canonical singularities and vanishing first Chern class. Also, some of the techniques used in the proof appear in the very interesting paper \cite{CP2} where one of the clever ideas to treat boundary problems is to use conic metrics (with angles going to $0$) in order to avoid the use of cuspidal metrics.

\subsection*{Stable varieties}
The first item of Theorem A above can be used to deduce properties about the so-called stable varieties. For short, stable varieties are the higher dimensional analogue of stable curves introduced by Deligne and Mumford, in the sense that they allow to compactify the moduli space of canonically polarized manifolds (say with fixed Hilbert polynomial). Their definition involves three requirements: projectivity, positivity of the canonical bundle and specific singularities, cf \S \ref{sec:stable0} and the references therein for a more detailed account. For the time being, we just need to know that if $X$ is a stable variety, then $K_X$ can be defined as a ample $\Q$-line bundle, and the normalization map $\nu:\Xnu\to X$ satisfies that $\nu^*K_X=K_{\Xnu}+D$ where $D$ is a reduced Weil divisor such that the pair $(\Xnu, D)$ has log canonical singularities. 

Therefore, Theorem A guarantees that $\mathscr T_{\Xnu}(-D)$ is semistable with respect to $K_{\Xnu}+D$, and one can easily infer from  
this that $\mathrm{Aut}(X)$ is a finite group, cf Corollary \ref{cor:stable}. One can also ask whether on can say anything about semistability of the tangent sheaf of $X$ itself. Inspired by the case of stable curves, we introduce the sheaf $\nu_*\mathscr T_{\Xnu}(-D)$ that should be the "tangent sheaf" we want to look at in this setting where the philosophy is to describe the objects upstairs. For non-normal varieties, the notions of slope and stability exist, but are way more delicate to deal with. We recall them in \ref{sec:stable}, and manage to prove the following result which is a non-trivial consequence of Theorem A:

\begin{thmb}
Let $X$ be a stable variety, and $\nu:\Xnu\to X$ its normalization map. Then sheaf $\nu_*\mathscr T_{\Xnu}(-D)$ is semistable with respect to $K_X$.
\end{thmb}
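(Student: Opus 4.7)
The plan is to lift a putative destabilizing subsheaf of $\F := \nu_*\mathscr{T}_{\Xnu}(-D)$ on $X$ to a subsheaf of $\mathscr{T}_{\Xnu}(-D)$ on the normalization, apply the semistability part of Theorem~A on the log pair $(\Xnu, D)$, and then push the resulting inequality back down to $X$ using the projection formula, invoking throughout that $\nu^*K_X = K_{\Xnu}+D$ (so in particular $\nu^*K_X$ is ample and $(\Xnu,D)$ is log canonical, which are exactly the hypotheses of the first item of Theorem~A).

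More concretely, I would let $\mathcal{G}\subset \F$ be a saturated coherent subsheaf with $0 < \rk\mathcal{G} < n$. By the $\nu^*\dashv\nu_*$ adjunction for the finite morphism $\nu$, the inclusion $\mathcal{G}\hookrightarrow \nu_*\mathscr{T}_{\Xnu}(-D)$ corresponds to a morphism $\nu^*\mathcal{G}\to \mathscr{T}_{\Xnu}(-D)$; let $\tilde{\mathcal{G}}$ be the saturation of its image. Since $\nu$ is generically an isomorphism on each irreducible component, $\rk\tilde{\mathcal{G}}=\rk\mathcal{G}$, and unraveling adjunction shows that $\mathcal{G}\hookrightarrow \nu_*\tilde{\mathcal{G}}$ as subsheaves of $\F$. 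Theorem~A applied to $(\Xnu,D)$ then gives
$$\mu_{\nu^*K_X}(\tilde{\mathcal{G}})\le \mu_{\nu^*K_X}(\mathscr{T}_{\Xnu}(-D)).$$

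The remaining task is to transport this inequality to $X$. The projection formula for $\nu$ yields $\nu^*K_X\cdot \alpha = K_X\cdot \nu_*\alpha$ as intersection numbers, which combined with the definition of $c_1$ of a coherent sheaf on the non-normal variety $X$ (via the determinant of the reflexive hull, as recalled in Section~\ref{sec:stable}), should produce the identity $\deg_{K_X}(\nu_*\mathcal{E}) = \deg_{\nu^*K_X}(\mathcal{E})$ for any reflexive $\mathcal{E}$ on $\Xnu$, applied to both $\mathscr{T}_{\Xnu}(-D)$ and $\tilde{\mathcal{G}}$. For the subsheaf, the inclusion $\mathcal{G}\hookrightarrow \nu_*\tilde{\mathcal{G}}$ has cokernel supported on the non-normal locus of $X$, i.e.\ on $\nu(D)$, and its contribution to $\deg_{K_X}(\nu_*\tilde{\mathcal{G}}) - \deg_{K_X}(\mathcal{G})$ should be non-negative. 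Chaining everything,
$$\mu_{K_X}(\mathcal{G})\le \mu_{K_X}(\nu_*\tilde{\mathcal{G}}) = \mu_{\nu^*K_X}(\tilde{\mathcal{G}}) \le \mu_{\nu^*K_X}(\mathscr{T}_{\Xnu}(-D)) = \mu_{K_X}(\F),$$
which is the desired semistability of $\F$ with respect to $K_X$.

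The technical heart---and expected main obstacle---is the non-normality bookkeeping: one has to make sure that the slope formalism on $X$ (Section~\ref{sec:stable}) is set up so that the projection-formula equality $\deg_{K_X}(\nu_*\mathcal{E}) = \deg_{\nu^*K_X}(\mathcal{E})$ holds, and that the cokernel of $\mathcal{G}\hookrightarrow \nu_*\tilde{\mathcal{G}}$, supported along the codimension-one conductor, contributes with the correct sign. Once these points are granted, the proof is essentially a purely formal reduction of Theorem~B to the analytic polystability of Theorem~A through the finite map $\nu$.
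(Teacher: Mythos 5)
Your overall strategy coincides with the paper's: lift the subsheaf to the normalization, apply Theorem~A to the log canonical pair $(\Xnu,\Delta)$ using $\nu^*K_X=K_{\Xnu}+\Delta$, and transport the slope inequality back to $X$. However, the step you yourself single out as the technical heart is false as stated. The identity $\deg_{K_X}(\nu_*\mathcal E)=\deg_{\nu^*K_X}(\mathcal E)$ does \emph{not} hold: on the non-normal $X$ the degree is defined through the coefficient $a_{n-1}$ of the Hilbert polynomial (not through a determinantal $c_1$, which is exactly what breaks down in codimension one), and while the projection formula for the finite map $\nu$ gives $a_{n-1}(\mathcal E)=a_{n-1}(\nu_*\mathcal E)$, the normalizing term changes: $a_{n-1}(\O_{\Xnu})=a_{n-1}(\Ox)+a_{n-1}(\T)$ with $\T=\nu_*\O_{\Xnu}/\Ox$ supported on the codimension-one conductor, so that $a_{n-1}(\T)>0$ in general and $\mu_{\nu^*K_X}(\mathcal E)=\mu_{K_X}(\nu_*\mathcal E)-a_{n-1}(\T)$. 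The paper computes precisely this defect for the ambient sheaf and observes that it enters with a favorable sign. In your chain the defect appears twice with opposite effect (once for $\tilde{\mathcal G}$, once for $\mathscr T_{\Xnu}(-\Delta)$) and, being independent of the rank, cancels; so the argument is repairable, but both displayed equalities are wrong and the proof as written does not close.

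Two further points are glossed over. First, the semistability asserted in Theorem~B is the one tailored in \S\ref{sec:stable}: one only tests subsheaves that are \emph{locally free in codimension one}, not arbitrary saturated subsheaves. This is not cosmetic: such sheaves have constant rank across the irreducible components of $X$, which is what allows the componentwise inequalities furnished by Theorem~A on the possibly disconnected $\Xnu$ to be assembled into a single inequality for the weighted rank used on reducible schemes; the Example following Proposition~\ref{prop:norm} shows that slopes genuinely misbehave for sheaves failing this hypothesis. Second, the paper's route for the subsheaf differs from yours: rather than comparing $\mathcal G$ with $\nu_*\tilde{\mathcal G}$ via a torsion cokernel, it pulls $\mathcal G$ back, uses Lemma~\ref{lem:norm} to land in $\mathscr T_{\Xnu}(-\Delta)$, invokes Proposition~\ref{prop:norm} to get $\mu_{\nu^*K_X}(\nu^*\mathcal G)=\mu_{K_X}(\mathcal G)$ (this is where local freeness in codimension one is indispensable), and then feeds the resulting generically injective morphism on a log resolution into the strengthened form of Theorem~A. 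Your cokernel comparison $\mu_{K_X}(\mathcal G)\le\mu_{K_X}(\nu_*\tilde{\mathcal G})$ is itself fine, since $a_{n-1}$ of a torsion sheaf is nonnegative, but it must be combined with the corrected push-forward formula above rather than with the asserted equality.
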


The proof of this theorem requires to understand the relationship between the slope of a sheaf and the one of its pull-back under the normalization map, which happens to be trickier than one could imagine at first sight, cf Remark \ref{rm:ct}. 

\subsection*{Organization of the paper}
\begin{itemize}
\item[$\bullet$] \S \ref{sec:lap}: We obtain a Laplacian estimate for solutions of Monge-Ampère equations with degenerate right hand side, which will turn out to be crucial for the proof of the semistability.
\item[$\bullet$] \S \ref{sec:poly}: We first recall the definitions of stability, log canonical singularities, logarithmic tangent sheaf, then we state the first part of Theorem A, explain the difficulties and give its proof.
\item[$\bullet$] \S \ref{sec:stable}: We give the definition of stable varieties as well as semistability in a context of non-normal varieties, and then prove Theorem B.
\item[$\bullet$] \S \ref{sec:gsp}: We prove a result of generic semipositivity (Theorem \ref{thm:gsp}) in the spirit Miyaoka's Theorem, that we apply to prove the second item of Theorem A.\\
\end{itemize}

\noindent
\textbf{Acknowledgements.} This work was initiated near the end of my PhD thesis, and I am grateful to my advisors Sébastien Boucksom and Mihai P\u{a}un who generously shared their ideas and helped me develop this article. I am particularly indebted to Mihai P\u{a}un who shared his unpublished work \cite{CP2} with me and followed the development of this paper with interest. Finally, I would also like to thank Patrick Graf for helpful discussions.

\section{A Laplacian estimate}
\label{sec:lap}
This section is devoted to the technical Proposition \ref{prop:est} which can be seen as a generalization of the Laplacian estimate obtained using Chern-Lu formula. It will be used in the next section to control the error term in the semistability inequality.

So let us first recall Chern-Lu's formula \cite{Chern, Lu}, which is going to be a essential tool to get the laplacian estimate. Let $(X,\om_X)$ and $(Y, \om_Y)$ be two Kähler manifolds, and $f:X\to Y$ an holomorphic map satisfying $\partial f\neq 0$. Then 
\[  \Delta_{\omx} \log |\partial f|^2 \ge \frac{\Ric \omx \otimes \omy(\d f, \overline {\d f})}{|\partial f|^2}-\frac{ \omx \otimes R^Y(\d f, \overline {\d f},\d f, \overline {\d f})}{|\partial f|^2} \]
where $\d f$ is viewed as a section of $T^*_X \otimes T_Y$.\\

Using this formula when $f$ is the identity map (but the Kähler forms differ), one can derive so-called laplacian estimates for the Kähler-Einstein equation provided that the reference metric has upper bounded holomorphic bisectional curvature (cf \cite[Section 7]{JMR}). The following proposition, inspired by \cite{Paun} (see also \cite[Theorem 10.1]{BBEGZ}), enables to derive (weaker) laplacian estimates in some cases where the Ricci curvature is not bounded from below:

\begin{prop}
\label{prop:est}
Let $X$ be a compact Kähler manifold of dimension $n$, $\om, \om'$ two cohomologous Kähler metrics on $X$. We assume that $\om'=\om+\ddc \vp$ with $\om'^n=e^{\pp-\psm} \om^n$ for some smooth functions $\ppm$, and that we have a constant $C>0$ satisfying:
\begin{enumerate}
\item[$(i)$]  $\sup_X |\vp| \le C$,
\item[$(ii)$] $\sup_X \pp \le C \, \,\mathrm{and} \, \, \, \ddc \ppm \ge -C \om$,
%$||e^{-\psm}||_{L^p(\om^n)} \le C$,
\item[$(iii)$]  $  \Theta_{\om}(T_X) \le C \om \otimes \mathrm{Id}_{T_X}$. 
\end{enumerate}
Then there exists some constant $M>0$ depending only on $n$ and $C$ such that 
\[ \om' \ge M^{-1} e^{\pp} \om.\]
\end{prop}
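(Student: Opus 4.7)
The plan is to reduce the statement to the equivalent inequality $\tr_{\om'}\om \le M e^{-\pp}$: in an $\om'$-orthonormal frame simultaneously diagonalising $\om$ with eigenvalues $\lambda_i > 0$, a bound $\sum_i \lambda_i \le M e^{-\pp}$ forces each $\lambda_i \le M e^{-\pp}$, whence the reciprocal eigenvalues $\lambda_i^{-1}$ of $\om'$ with respect to $\om$ all exceed $M^{-1} e^{\pp}$. The core tool is the Chern-Lu formula recalled above, applied to $f = \mathrm{id}\colon (X, \om') \to (X, \om)$, for which $|\partial f|^2 = \tr_{\om'}\om$. Assumption (iii) gives $R^{\om}_{i \bar i j \bar j} \le C \lambda_i \lambda_j$ in the chosen frame, so the target-bisectional term in Chern-Lu is dominated by $C \tr_{\om'}\om$. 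Expanding the source-Ricci term via the Monge-Amp\`ere equation $\Ric \om' = \Ric \om - \ddc \pp + \ddc \psm$ yields
\[
\Delta_{\om'} \log \tr_{\om'}\om \;\ge\; \frac{\sum_i \bigl[(\Ric \om)_{i\bar i} - (\ddc \pp)_{i\bar i} + (\ddc \psm)_{i\bar i}\bigr]\,\lambda_i}{\sum_j \lambda_j} - C \tr_{\om'}\om,
\]
and I would then apply the maximum principle to the auxiliary function
\[
H \;:=\; \log \tr_{\om'}\om + \pp - A\vp,
\]
for a constant $A>0$ to be chosen large in terms of $n$ and $C$.

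The hard part is the $-\sum_i(\ddc \pp)_{i\bar i}\lambda_i/\sum_j \lambda_j$ piece, since (ii) controls $\ddc \pp$ only from below. The key trick is the combinatorial cancellation with the term $\Delta_{\om'}\pp = \sum_i (\ddc \pp)_{i\bar i}$ coming from the $+\pp$ in $H$: their sum equals
\[
\sum_i (\ddc \pp)_{i\bar i}\,\Big(1 - \frac{\lambda_i}{\sum_k \lambda_k}\Big) \;=\; \sum_i (\ddc \pp)_{i\bar i}\,\frac{\sum_{j \ne i}\lambda_j}{\sum_k \lambda_k},
\]
with weights in $[0,1)$. The lower bound $(\ddc \pp)_{i\bar i} \ge -C \lambda_i$ from (ii) then yields $\ge -C \tr_{\om'}\om$ for this combined contribution, using the elementary inequality $\sum_i \lambda_i^2 \le (\sum_i \lambda_i)^2$. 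The pieces involving $\ddc \psm$ and $\Ric \om$ are bounded below by $-C' \tr_{\om'}\om$ in the same straightforward way, using (ii) together with the fact that $\Ric \om$ is bounded below because $\om$ is a smooth metric on a compact manifold.

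Combining these with $\Delta_{\om'}\vp = n - \tr_{\om'}\om$ produces an inequality of the form
\[
\Delta_{\om'} H \;\ge\; (A - C'')\,\tr_{\om'}\om - An
\]
for some $C''$ depending only on $n$ and $C$. Taking $A := C'' + 1$ and evaluating at a maximum point $x_0 \in X$ of $H$ forces $\tr_{\om'}\om(x_0) \le An$, and hence $H(x_0) \le \log(An) + C + AC$ by (i) and (ii). Since $H(y) \le H(x_0)$ for every $y\in X$, applying $\vp \ge -C$ once more gives $\log\tr_{\om'}\om + \pp \le \log(An) + (1 + 2A)C$ everywhere on $X$, and exponentiating yields $\tr_{\om'}\om \le Me^{-\pp}$ with $M$ depending only on $n$ and $C$, which is the desired conclusion by the opening reduction.
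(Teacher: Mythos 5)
Your argument follows the paper's proof essentially step by step: the same reduction to bounding $\tr_{\om'}\om$, the same application of Chern--Lu to $\mathrm{id}\colon(X,\om')\to(X,\om)$, the same P\u{a}un-type trick of adding $\pp$ to the test function so that $\Delta_{\om'}\pp$ compensates the uncontrolled $-\ddc\pp$ contribution to $\Ric\om'$, and the same maximum principle applied to $\log\tr_{\om'}\om+\pp-A\vp$. Your pointwise bookkeeping of the cancellation in a simultaneously diagonalizing frame is a correct, slightly more explicit version of the paper's computation via the identity $(\alpha,\beta)_{\om'}=\tr_{\om'}\alpha\cdot\tr_{\om'}\beta-n(n-1)\,\alpha\wedge\beta\wedge\om'^{n-2}/\om'^{n}$.

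There is, however, one step that does not deliver the statement as claimed: you bound the $\Ric\om$ contribution to the source-Ricci term from below \emph{because $\om$ is a smooth metric on a compact manifold}. That makes the final constant $M$ depend on $\om$ itself (through a lower bound for $\Ric\om$), whereas the proposition asserts that $M$ depends only on $n$ and $C$ --- and this uniformity is the entire point of isolating hypotheses $(i)$--$(iii)$. Moreover, the proposition is invoked in the proof of Lemma \ref{lem2} by rerunning the argument verbatim on the complete non-compact manifold $X\setminus D$ with the Poincar\'e-type reference metric $\om_P$ (Yau's generalized maximum principle replacing compactness); there your appeal to compactness is simply unavailable, and the lower Ricci bound for the reference metric has to be extracted from the curvature hypothesis, which is exactly what the paper does by writing $\Ric\om'\ge -nC\om-(C\om+\ddc\pp)$, i.e.\ by taking $\Ric\om\ge -nC\om$ from $(iii)$. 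Note that this implication is not the trivial one: an upper Griffiths bound $\Theta_{\om}(T_X)\le C\om\otimes\mathrm{Id}$ by itself yields $\Ric\om\le nC\om$ after tracing, so you should make explicit in what sense the curvature hypothesis also controls $\sum_i(\Ric\om)_{i\bar i}\lambda_i$ from below (in the application it does, since $\om_P$ has bounded geometry and hence a two-sided bound on its curvature tensor). Everything else in your write-up is correct.
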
 

\begin{proof}
%First observe that assumption $(i)$ gives a uniform bound for the potential $\vp$, thanks to Ko\l odziej estimate \cite{Kolo}. 

The main difficulty is that we do not have a control on the lower bound of $\Ric \om'= \Ric \om + \ddc \psm-\ddc \pp$. The trick, inspired by \cite{Paun}, is to add $\pp$ in the laplacian appearing in Chern-Lu formula. Let us now get into the details. 
We apply Chern-Lu's formula to $f=\mathrm{id}: (X, \om') \to (X, \om)$. Then $|\partial f|^2 = \tr_{\om'}\om$. We denote by $g$ (resp. $h$) the hermitian metrics induced by $\om'$ (resp. $\om$). The second term of Chern-Lu formula is easily dealt with: 
\begin{eqnarray*}
 \om' \otimes R^{h}(\d f, \overline {\d f},\d f, \overline {\d f})&=& -g^{i\bar j}g^{k \bar l} R_{i \bar j k \bar l}^h\\
 & \ge& -C g^{i\bar j}g^{k \bar l} (h_{i\bar j}h_{k \bar l}+h_{i\bar l}h_{k \bar j}) \\
 & \ge & -2C (\tr_{\om'}\om)^2
 \end{eqnarray*}
 Now recall that if $\alpha$ and $\beta$ are two hermitian $(1,1)$ forms, and if $(\alpha, \beta)_{\om'}$ denotes the hermitian product induced by $\om'$, then we have
 \[(\alpha, \beta)_{\om'}= \tr_{\om'} \alpha \, \cdotp \, \tr_{\om'} \beta -n(n-1) \frac{\alpha \wedge \beta \wedge {\om'}^{n-2}}{{\om'}^{n}} \]
Moreover, one can check that \[\alpha\otimes \beta( \partial f, \overline{\partial f}) = (\alpha,\beta)_{\om'}\] (in the tensor product, $\alpha$ stands for the hermitian form induced by $\alpha$ on $T_X^*$ relatively to $\om'$). 
As $\Ric \om' \ge -nC \om -(C \om +\ddc \pp)$, using the two previous identities, we get: 
\[\Ric \om' \otimes \om(\d f, \overline {\d f}) \ge -nC (\tr_{\om'}\om)^2 - \tr_{\om'}\om \, \cdotp \, \tr_{\om'}(C\om+\ddc \pp)\] 
At that point, Chern-Lu formula gives us: 
\[\Delta_{\om'} \log \tr_{\om'}\om \ge  -(n+3)C \,  \tr_{\om'}\om - \tr_{\om'}\ddc \pp\]
and therefore: 
\[\Delta_{\om'}( \log\tr_{\om'}\om+\pp) \ge  -(n+3)C \,  \tr_{\om'}\om\]
Setting $A=(n+3)C+1$, we get as usual:
\[\Delta_{\om'}( \log \tr_{\om'}\om+\pp-A\vp) \ge  \tr_{\om'}\om-nA\]
The end is classic: we choose a point $p$ where $\log \tr_{\om'}\om+\pp-A\vp$ attains its maximum; then we have
\begin{eqnarray*}
\log \tr_{\om'}\om & \le & ( \log \tr_{\om'}\om+\pp-A\vp)(p)-\pp+A \vp\\ 
& \le & (\log nA+ \sup \pp +2A \sup |\vp|) - \pp
\end{eqnarray*}
which gives the expected result since we have a uniform bound on $||\vp||_{\infty}$ by assumption. 
\end{proof}

\begin{rema}
Note that it is not clear a priori to deduce the previous estimate using \cite{Paun} by exchanging the role of $\om$ and $\om'$ because we would no longer have control on the bisectional curvature of $\om'$.   
\end{rema}
Combining the previous result and P\u{a}un's estimate, we obtain the following estimate: 

\begin{coro}
Let $X$ be a compact Kähler manifold of dimension $n$, $\om, \om'$ two cohomologous Kähler metrics on $X$. We assume that $\om'=\om+\ddc \vp$ with $\om'^n=e^{\pp-\psm} \om^n$ for some smooth functions $\ppm$, and that we have a constant $C>0$ and some $p>1$ satisfying:
\begin{enumerate}
\item[$(i)$] $\sup_X \pp \le C \, \,\mathrm{and} \, \, ||e^{-\psm}||_{L^p(\om^n)} \le C$,
\item[$(ii)$] $ \ddc \ppm \ge -C \om$.
\end{enumerate}
Then there exists some constant $M>0$ depending only on $n,p, C$ and $\om$ such that 
\[ M^{-1} e^{\pp} \om \le \om' \le  M e^{-\psm} \om.\]
\end{coro}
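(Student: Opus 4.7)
The plan is to combine the lower bound provided by Proposition \ref{prop:est} with P\u{a}un's complementary upper Laplacian estimate from \cite{Paun}, bridging the two with Ko\l odziej's $L^\infty$ theorem to supply the $C^0$ control on $\vp$ that both inputs require.

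First I would establish a uniform bound $\|\vp\|_\infty \le C_1(n,p,C,\om)$. The right-hand side $e^{\pp-\psm}$ of the Monge-Amp\`ere equation has $L^p(\om^n)$ norm at most $e^C\cdot C$, using $\sup_X \pp \le C$ together with $\|e^{-\psm}\|_{L^p}\le C$; since $\int e^{\pp-\psm}\om^n = \int \om'^n = \int \om^n$ the equation is normalized, so Ko\l odziej's $L^\infty$ theorem (in its K\"ahler extension) yields the desired bound.

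With this estimate in hand, I would invoke Proposition \ref{prop:est} to obtain $\om' \ge M_1^{-1}e^{\pp}\om$: hypothesis (i) is exactly the $L^\infty$ estimate just obtained, (ii) is the corollary's hypothesis, and (iii) is automatic since $\om$ is a smooth K\"ahler form on the compact manifold $X$, hence has bounded holomorphic bisectional curvature. For the upper bound, I would apply P\u{a}un's Laplacian estimate from \cite{Paun}; under the same hypotheses (now with $\vp$ bounded) it produces a constant $M_2$ with $\tr_{\om}\om' \le M_2 e^{-\psm}$, and since the largest eigenvalue of $\om'$ with respect to $\om$ is dominated by the trace this translates into $\om' \le M_2 e^{-\psm}\om$. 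Setting $M=\max(M_1,M_2)$ concludes the proof.

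The hard part lies upstream rather than in the assembly: the two directions of the estimate are genuinely independent. A naive attempt to deduce the upper bound from the lower bound alone via the Monge-Amp\`ere identity $\prod\lambda_i=e^{\pp-\psm}$ would only yield a bound on the largest eigenvalue of the form $M_1^{n-1}e^{-(n-2)\pp-\psm}$, which requires a \emph{lower} bound on $\pp$ that the hypotheses do not supply (for $n\ge 3$). The remark following Proposition \ref{prop:est} records the analogous obstruction for the reverse Chern-Lu computation, so each of the two halves must be obtained from its own Chern-Lu/Yau-type identity, and the corollary simply packages them.
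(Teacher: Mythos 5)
Your proof is correct and follows the same route the paper intends: the paper offers no written proof beyond ``combining the previous result and P\u{a}un's estimate,'' and your assembly --- Ko\l odziej's $L^\infty$ theorem (using the $L^p$ bound on $e^{-\psm}$ and $\sup_X\pp\le C$) to control $\vp$, then Proposition \ref{prop:est} for the lower bound and P\u{a}un's Laplacian estimate for the upper bound, with hypothesis $(iii)$ of the Proposition absorbed into the dependence of $M$ on the fixed metric $\om$ --- is exactly the intended argument. Your closing observation that the two bounds cannot be derived from one another via $\prod\lambda_i=e^{\pp-\psm}$ for $n\ge 3$ is a correct and worthwhile addition.
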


\section{Polystability of the logarithmic tangent sheaf}
\label{sec:poly}
\subsection{Generalities}

In this section, we briefly recall the definitions of slope, semi-stability, various types of singularities appearing in the Minimal Model Program as well as the (logarithmic) tangent sheaf before exposing the previously known results. We refer to \cite{Har2}, \cite[Chap. V]{Koba} or \cite{HL} for more details. 

\subsubsection{Notion of stability}
In the following, $X$ will be a complex projective normal variety of dimension $n$, and $\F$ will always denote a coherent sheaf. We write $\F^*= \mathscr {H}\! om(\F, \Ox)$ for the dual of $\F$. We say that $\F$ is reflexive if the natural map \[j : \F \to \F^{**} \]
is an isomorphism. For instance, the dual of a coherent sheaf is always reflexive.  Moreover, as the kernel of $j$ is exactly the torsion of $\F$, a reflexive sheaf is automatically torsion-free. 
We define the rank of a coherent sheaf $\F$ to be its rank at the generic point (or equivalently, consider the Zariski open subset where $\F$ is locally free), we denote if by $\mathrm{rk} \,  \F$. \\

We need now to define the determinant of a coherent sheaf $\F$. We set $r= \mathrm{rk} \, \F$, and we let \[\det \F := (\Lambda^r \F)^{**}\] be the determinant of $\F$. It is a rank one reflexive sheaf on $X$, but if $X$ is not smooth, it is in general not a line bundle, ie it is not locally free. 
As $X$ is normal, there is a $1-1$ correspondence between rank one reflexive sheaves (up to isomorphism) and Weil divisors (up to linear equivalence), the correspondence being given in the usual way $D \mapsto \Ox(D)= \{f, \mathrm{div} ( f) \ge -D\}$ in one direction, and in the other direction, given $\F$ a rank one reflexive sheaf, we choose a Weil divisor on $\xreg$ representing the line bundle $\F_{|\xreg}$ and take its closure. 

\noindent
We will denote by $c_1(\F)$ the equivalence class of any Weil divisor attached to $\det \F$. We can now define the slope:

\begin{defi}
\label{defi:slope}
Let $H$ be an ample line bundle on $X$. We define the slope of $\F$ with respect to $H$ to be the rational number
\[\mu_H(\F):= \frac{c_1(\F) \cdotp H^{n-1}}{\mathrm{rk} \, \F}\]
\end{defi}

Let us get now to the definition of slope stability, which goes back to Mumford \cite{Mum63} and Takemoto \cite{Take}:

\begin{defi}
Let $\mathscr E$ be a torsion-free coherent sheaf on $X$, and $H$ be an ample line bundle. 

\noindent
We say that $\mathscr E$ is \textit{semistable} (resp. \textit{stable}) with respect to $H$ if for every coherent subsheaf $\F \subset \mathscr E$ (resp. every non-zero and proper coherent subsheaf $\F$), we have
\[\mu_H(\F) \le \mu_H(\mathscr E) \quad \mathrm{(resp.} \, \mu_H(\F) < \mu_H(\mathscr E))\]
We say that $\mathscr E$ is \textit{polystable} (with respect to $H$) if $\mathscr E$ is the direct sum of stable subsheaves with same slope.
\end{defi}

\subsubsection{Canonical and log canonical singularities}
Let us now very briefly give a definition of the class of singularities we are going to deal with (see \cite{KM} for a more detailed account): 

\begin{defi}
\label{sing}
Let $X$ be a complex normal variety, $D$ an \textit{effective} Weil divisor $D$ such that $K_X+D$ is $\Q$-Cartier and $\pi:X'\to X$ a log resolution of $(X, D)$. We define the coefficients $a_i$ by the formula $$K_{X'}=\pi^*(K_X+D)+\sum a_iE_i$$ 
where $E_i$ is either an exceptional divisor or the strict transform of a component of $D$. 

$\cdotp$ If $D=0$, we say that $X$ has canonical (resp. log terminal) singularities if for all $i$, one has $a_i \ge 0$ (resp. $a_i>-1$)

 $\cdotp$ Else, we say that the pair $(X,D)$ has klt (resp. log canonical) singularities if for all $i$, one has $a_i > -1$ (resp. $a_i\ge-1$)
\end{defi}

\noindent
So all these singularities are normal so that the notions of slope and semistability make sense for these varieties. \\

\subsubsection{The logarithmic tangent sheaf}
Let us first recall that on an arbitrary complex variety $X$, one can define the sheaf $\Omega_X^1$ of Kähler differentials (cf \cite[II.8]{Har}) and define the tangent sheaf $\mathscr T_X$ of $X$ as the dual $(\Omega_X^1)^*$ of the sheaf of Kähler differentials. If $X$ is smooth, then this sheaf corresponds to the sheaf associated with the tangent bundle $T_X$. If $X$ is merely normal, then $\mathscr T_X$ is a reflexive sheaf so that by the observation above, it can equivalently be defined as push-forward of $\mathscr T_{\xreg}$ via the open immersion $\xreg \hookrightarrow X$.\\

Let us now consider log pairs: let $X$ be a normal projective variety, and $D$ be a reduced Weil divisor on $X$. If the pair $(X,D)$ is log smooth, we have a well defined logarithmic tangent bundle $T_{X}(-D)$ which is the dual of the bundle of logarithmic differentials $\Omega_X^1(\log D)$. It simply consists of vector fields that vanish along $D$, i.e. if $D$ is locally given by $(z_1\cdots z_k=0)$, then the sheaf at stake is the locally free $\mathcal O_X$-module generated by $$z_1 \frac{\d}{\d z_1}, \ldots, z_k \frac{\d}{\d z_k}, \frac{\d}{\d z_{k+1}}, \ldots, \frac{\d}{\d z_n}$$

To define its analogue in a more general setting, we proceed as follows.
We denote by $(X,D)_{\rm reg}$ the simple normal crossing (snc for short) locus of the pair $(X,D)$, i.e.  the locus of points $x\in X$ where $(X,D)$ is log smooth in a neighborhood of $x$. As $X$ is normal, it is smooth in codimension $1$, and $D$ is generically smooth, therefore $(X, D)_{\rm reg}$ is a Zariski open set whose complement has codimension at least $2$. Let us denote by $j :(X,D)_{\rm reg} \hookrightarrow X $ the open immersion. 

\begin{defi}
Let $(X, D)$ be a log pair as above, and denote by $U:=(X,D)_{\rm reg}$ its snc locus. The logarithmic tangent sheaf of $(X,D)$ is defined as $j_*T_{U}(-D_{|U})$. 
\end{defi}

This sheaf is automatically coherent, and hence reflexive by e.g. \cite[Proposition 1.6]{Har2}. One can equivalently define the logarithm tangent sheaf as the sheafification of the module of derivations that preserve the ideal sheaf corresponding to $D$.

\subsection{Statement of the result}
\label{statement}
We have now all the tools in hand to explain our results. We showed in the introduction how the conjunction of the Kobayashi-Hitchin correspondence 
and the Aubin-Yau theorem provided the polystability of $T_X$ whenever $X$ is a compact Kähler manifold such that $K_X$ is ample or trivial. Relying on this and the robustness of the Kähler-Einstein theory, Enoki \cite{Enoki} managed to essentially extend this result whenever $X$ has \textit{canonical} singularities in the sense given in Definition \ref{sing} above. His strategy consists of working on a resolution and constructing there smooth approximate Kähler-Einstein metrics. They induce approximate Hermite-Einstein metrics on $T_X$ (in a naive sense), and using the fact that the singularities are canonical, he manages to control the error terms or at least the one arising with a bad sign in view of the semistability property.\\

Our main result extends Enoki's theorem (say when $K_X$ is ample) to the case of a log canonical pair $(X,D)$ and also provides the polystability:

\begin{theo}
\label{thm:pair}
Let $X$ be a normal projective variety and $D$ be a reduced Weil divisor such that  $K_X+D$ is an ample $\Q$-line bundle. Then $\Txd$ is polystable with respect to $K_X+D$. 
\end{theo}

Compared to Enoki's setting, a large amount of new difficulties arise in this generalized setting that require a lot of fine analysis and some recent works: 
\begin{enumerate}
\item[$\cdotp$] Getting precise estimates for Monge-Ampère with a very degenerate right hand side (this is the content of Lemma \ref{lem2} which is a combination of Proposition \ref{prop:est} and \cite[Theorem A]{GW}) 
\item[$\cdotp$] Working with cuspidal metrics instead of smooth ones (which requires the delicate Proposition \ref{slope1})
\item[$\cdotp$] Analyzing the limiting behavior of the approximate Kähler-Einstein metrics (we will use \cite{BG} and \cite[Theorem B]{GW} in a crucial way) to get polystability
\end{enumerate}

\noindent
However, these complications come from the relatively large degree of generality that we chose to work with, and we could have obtained weaker generalizations of Enoki's theorem at a less expansive cost, cf Remark \ref{remarque}.\\

\subsection{An application to automorphism groups}
A first application of Theorem \ref{thm:pair} concerns the finiteness of some automorphism groups. Indeed, it is well-known that semistable sheaves with negative slopes do not admit any non-trivial section, and relying on this fact and the above theorem, one can prove:

\begin{coro}
\label{cor:stable}
The automorphism group $\mathrm{Aut}(X)$ of a stable variety $X$ is finite. 
\end{coro}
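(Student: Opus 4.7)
The plan is to deduce Corollary \ref{cor:stable} from Theorem \ref{thm:pair} applied to the normalization of $X$, combined with the classical fact that a semistable sheaf of negative slope admits no non-zero global section. Since $X$ is a stable variety, the normalization map $\nu:\Xnu\to X$ is finite and satisfies $\nu^* K_X = K_{\Xnu} + D$ with $(\Xnu, D)$ log canonical. Because $\nu$ is finite and $K_X$ is ample, $K_{\Xnu} + D$ is itself an ample $\Q$-line bundle, so Theorem \ref{thm:pair} yields the polystability --- in particular the semistability --- of $\mathscr T_{\Xnu}(-D)$ with respect to $K_{\Xnu}+D$.

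Next I would compute the slope. Since $c_1\bigl(\mathscr T_{\Xnu}(-D)\bigr) = -(K_{\Xnu}+D)$, one has
\[\mu_{K_{\Xnu}+D}\bigl(\mathscr T_{\Xnu}(-D)\bigr) \,=\, -\frac{(K_{\Xnu}+D)^n}{n} \,<\, 0\]
by ampleness of $K_{\Xnu}+D$. Any non-zero section of $\mathscr T_{\Xnu}(-D)$ would produce an injection $\O_{\Xnu} \hookrightarrow \mathscr T_{\Xnu}(-D)$ whose source has slope $0$, contradicting semistability; hence $H^0(\Xnu, \mathscr T_{\Xnu}(-D)) = 0$. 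The next step is to transport this vanishing back to $X$ via the natural injection
\[ H^0(X, \mathscr T_X) \,\hookrightarrow\, H^0\bigl(\Xnu, \mathscr T_{\Xnu}(-D)\bigr),\]
which comes from the fact that a derivation of $\Ox$ lifts, via the universal property of the normalization, to a derivation of $\O_{\Xnu}$ that preserves the conductor ideal --- and on $\Xnu$ this conductor cuts out precisely the divisor $D$. One concludes that $H^0(X, \mathscr T_X) = 0$.

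Finally, $H^0(X, \mathscr T_X)$ is the Lie algebra of the connected component of the identity $\mathrm{Aut}^0(X)$, so the vanishing forces $\mathrm{Aut}^0(X) = \{1\}$. On the other hand, $K_X$ being an ample $\Q$-Cartier divisor, some large multiple $mK_X$ is very ample and embeds $X$ into projective space; the induced action of $\mathrm{Aut}(X)$ on $H^0(X,\O_X(mK_X))$ realizes $\mathrm{Aut}(X)$ as a closed algebraic subgroup of a $\mathrm{PGL}_N$, hence as a group scheme of finite type. A group scheme of finite type whose identity component is trivial is finite, which concludes the argument. I expect the main subtlety to lie in the identification of vector fields on the (possibly non-normal) stable variety $X$ with elements of $H^0(\Xnu, \mathscr T_{\Xnu}(-D))$: this is a well-known feature of semi-log-canonical singularities, but it does rely crucially on $D$ being the reduced preimage of the non-normal locus, i.e.\ on $D$ being precisely the conductor of $\nu$.
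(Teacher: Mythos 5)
Your proof is correct and follows essentially the same route as the paper: apply Theorem \ref{thm:pair} to the normalization, observe that $\mathscr T_{\Xnu}(-D)$ is semistable of negative slope $-(K_{\Xnu}+D)^n/n<0$ and hence admits no non-zero global section, and conclude finiteness by linearizing the automorphism group via the ample canonical polarization. The only (immaterial) difference is one of direction: the paper lifts automorphisms to the normalization, reduces to a connected component, and identifies the Lie algebra of $\mathrm{Aut}(\Xnu,D)$ with $H^0(\Xnu,\mathscr T_{\Xnu}(-D))$, whereas you stay on $X$ and transport infinitesimal automorphisms upward through the inclusion $H^0(X,\mathscr T_X)\hookrightarrow H^0(\Xnu,\mathscr T_{\Xnu}(-D))$, which rests on the (true, but Seidenberg-type rather than universal-property) fact that derivations extend to the integral closure and preserve the conductor ideal, reduced for demi-normal $X$.
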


Here, a variety is said to be stable (in the sense of Koll\'ar-Shepherd-Barron-Alexeev) if it is projective, has semi-log canonical singularities, and has an ample canonical bundle, cf Section \ref{sec:stable0}. The finiteness of the automorphism group of such varieties was already well-known, cf \cite{Masa, BHPS} or \cite[Theorem 6.3]{BG} for a differential-geometric proof in the normal case.

\begin{proof}[Proof of Corollary \ref{cor:stable}]
Let $\nu:Y\to X$ be the normalization of $X$. By the universal property of the normalization, any automorphism of  $X$ can be lifted to a automorphism of $Y$ giving an injection $i:\mathrm{Aut}(X)\to \mathrm{Aut}(Y)$; observe that  any element in $i(\mathrm{Aut}(X))$ preserves the conductor $\Delta$ of the normalization. Moreover, the subgroup of $\mathrm{Aut}(Y)$ fixing every connected component of $Y$ has finite index in $\mathrm{Aut}(Y)$, so we can assume that $Y$ is connected. 

So we are reduced to showing that given a normal variety $Y$ and a reduced divisor $\Delta$ such that the pair $(Y,\Delta)$ has log canonical singularities and satisfies $K_Y+\Delta$ is ample, then the automorphism group $\mathrm{Aut}(Y,\Delta)$ of the pair $(Y,\Delta)$ is finite; this group is defined as the (closed) subgroup of $\mathrm{Aut}(Y)$ fixing $\Delta$.
But any such automorphism will preserve $K_Y+\Delta$, and therefore our group is a subgroup of the automorphism of a polarized variety so it is a linear group (as it embeds into $\mathrm{PGL}(N,\mathbb C)$ for some large integer $N$). Therefore $\mathrm{Aut}(Y,\Delta)$ is finite if and only if its tangent space is trivial, but its tangent space is precisely $H^0(Y, \mathscr T_Y(-\Delta))$, the space of holomorphic vector fields tangent to $\Delta$, cf e.g. \cite[Lemma 5.2]{BBEGZ}.

We are almost done as any non-zero element $\xi \in H^0(Y, \mathscr T_Y(-\Delta))$ would generate a trivial rank $1$ subsheaf of  $\mathscr T_Y(-\Delta)$, hence having vanishing slope. By the semistability of $\mathscr T_Y(-\Delta)$, its slope (with respect to $K_Y+\Delta$) should then be non-negative, whereas it is equal to $-(K_Y+\Delta)^n/n<0$.
\end{proof}

\subsection{Proof of the Theorem}

\begin{proof}[Proof of Theorem \ref{thm:pair}]
We proceed in four main steps.\\

\noindent
\textbf{Step 1. Reduction to the log smooth case}\\
Let $(Y,\Delta)$ be a log canonical pair with $\Delta$ a reduced Weil divisor such that $K_Y+\Delta$ is ample, and let $\G$ be a coherent subsheaf of $\mathscr{T}_Y(-\Delta)$ of rank $r>0$. We choose $\pi: X \to Y$ a log resolution of the pair that is an isomorphism over its snc locus; we denote by $\Delta'$ the strict transform of $\Delta$. There exists a $\pi$-exceptional divisor $E=\sum a_i E_i$ such that $K_X+\Delta'=\pi^*(K_Y+\Delta)+E$, and we have $a_i \ge -1$ for all $i$ by the log canonical assumption. Let us set $F=\sum_{a_i=-1}E_i$ and $D:=\Delta'+F$; for some reasons that will appear later, we want to find a coherent subsheaf $\F$ of $\txdf$ such that $\pi_*c_1(\F)=c_1(\G)$. We consider the snc locus $(Y, \Delta)_{\rm reg}$ of the pair, which open subset of $Y$ whose complement has codimension at least $2$, and we set $U:=\pi^{-1}((Y, \Delta)_{\rm reg})$ and let $j:U\to X$ denote the open immersion. Then the sheaf $\F:=(j_{*}(\pi^{*}\mathscr G)_{|U}) \cap \txdf$ is a coherent subsheaf of $\txdf$ that satisfies $\pi_*c_1(\F)=c_1(\G)$ on $(Y, \Delta)_{\rm reg}$, hence on the whole $Y$.\\
Finally, as $c_1(\mathscr{T}_Y(-\Delta))$ is represented by $-(K_Y+\Delta)$, the projection formula yields the following equivalences: 

\begin{eqnarray}
\label{slope2}
\frac{c_1(\G)\,\cdotp \, (K_Y+\Delta)^{n-1}}{r} & \le & \frac{c_1(\mathscr{T}_Y(-\Delta))\,\cdotp \,(K_Y+\Delta)^{n-1}}{n} \\
 \Longleftrightarrow \quad \frac{c_1(\F) \, \cdotp \, \pi^*(K_Y+\Delta)^{n-1}}{r} &\le &\frac{-\pi^*(K_Y+\Delta)^{n}}{n} \nonumber \\
 \Longleftrightarrow \quad  \frac{c_1(\F)\cdotp \pi^*(K_Y+\Delta)^{n-1}}{r} &\le & \frac{-(K_X+\Delta'-E) \, \cdotp \, \pi^*(K_Y+\Delta)^{n-1}}{n} \nonumber \\
 \Longleftrightarrow \quad  \frac{c_1(\F)\cdotp \pi^*(K_Y+\Delta)^{n-1}}{r} &\le & \frac{c_1(\txdf) \, \cdotp \, \pi^*(K_Y+\Delta)^{n-1}}{n} \nonumber
\end{eqnarray}
as $E$ is $\pi$-exceptional.\\
Therefore we are reduced to showing the semistability of $\txdf$ with respect to $\pi^*(K_Y+\Delta)$. Actually, we will prove a bit more and show that for every generically injective sheaf morphism $\F \to \txdf$, we have the expected slope inequality.\\

\noindent
\textbf{Step 2. Construction of appropriate cusp metrics}\\
Let us introduce some notations first. We choose $A$ an ample line bundle on $X$, $\om_A$ a Kähler form whose cohomology class is $c_1(A)$, and $\om_0$ a Kähler form on $Y$ representing $c_1(K_Y+\Delta)$. Recall that we denote by $F$ the "purely log canonical" part of $E$, ie $F=\sum_{a_i=-1}E_i$, we set $X_0:=X\setminus D$, and we fix two parameters $\ep,t>0$ for our regularization process. Finally, we consider for each $i$ such that $a_i>-1$ a regularizing family $(\theta_{i,\ep})_{\ep>0}$ of $(1,1)$-forms in $c_1(a_iE_i)$ such that $\theta_{i,\ep}$ converges to the singular current $a_i[E_i]$ when $\ep$ goes to zero. An explicit formula for $\theta_{i,\ep}$ can be given in terms of hermitian metrics $h_i$ on $\Ox(E_i)$ as well as sections $s_i$ cutting out the divisors $E_i$. In the following, we will work with the metric $\om$ satisfying:
\begin{equation}
\label{eq:ricci}
\Ric \om = -\om +t\om_A+[D]-\sum_{a_i>-1} \theta_{i,\ep}
\end{equation}

This metric $\om$ belongs to $c_1(\pi^ *(K_Y+\Delta)+tA)$ and depends on $\ep$ and $t$ but we choose not to mention its dependence to keep the notation lighter. It is smooth on $X_0$, and has cusp singularities along $D$, ie around points where $D$ is given by $(z_1 \cdots z_r=0)$, $\om$ is uniformly equivalent to the model cusp metric $\sum_{k=1}^r \frac{i dz_k \wedge d\bar z_k}{|z|^2 \log^2 |z|^2}+\sum_{k>r} idz_k \wedge d \bar z_k$. The existence (and uniqueness) of $\om$ is guaranteed by results of Kobayashi \cite{KobR} and Tian-Yau \cite{Tia}, who obtained it as the solution of the Monge-Ampère equation 
\[(\pi^ *\om_0+t\om_A+\ddc \vp)^ n = \frac{e^ {\vp+f}\prod_{i} (|s_i|^ 2+\ep^ 2)^ {a_i}}{\prod_j |t_j|^2}\]
where $f$ is some smooth function determined by the hermitian metrics chosen on $E$ and $D$, and $(s_i=0)$ (resp. $(t_j=0)$) cuts out $E_i$ for $i$ such that $a_i>-1$ (resp. the $j$-th component of $D$). Moreover, one can rewrite this equation in a perhaps more standard form using the potential $\vp_P =-\sum_j \log \log^2 |t_j|^2$ of the cusp metric (also called Poincaré metric) along $D$: setting $\om_P:=\pi^*\om_0+t\om_A+\ddc\vp_P$ and $\psi:=\vp-\vp_P$, the equation becomes
\begin{equation}
\label{eq:cusp}
(\om_P+\ddc \psi)^ n= \prod_{i} (|s_i|^ 2+\ep^ 2)^ {a_i} e^ {\psi+F} \om_P^ n
\end{equation}
where $F$ is a smooth function when read in the quasi-coordinates, cf  \cite{KobR} or \cite[Lemma 4.3]{G12}. The metric $\om=\om_P+\ddc \psi$ (that depends strongly on $\ep$ and $t$) satisfies the following property, which will be a cornerstone of the proof:

\begin{lemm}
\label{lem2}
For every fixed $t>0$, and any section $s$ of a component of $D+E$, the integral
\[\int_{X_0} \frac{\ep^ 2}{|s|^2+\ep^ 2} \, \om_A \wedge \om^ {n-1}\]
converges to $0$ when $\ep$ goes to $0$. 
\end{lemm}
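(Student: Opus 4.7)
The plan is to combine the pointwise identity $\om_A\wedge\om^{n-1}=\frac{1}{n}(\tr_\om\om_A)\om^n$ with the Monge--Amp\`ere equation \eqref{eq:cusp} and a uniform upper bound on $\tr_\om\om_A$ coming from the Corollary of Proposition \ref{prop:est}. After a cancellation of the conic factors with $a_i>0$, the $\ep$-dependence of the integrand will concentrate in the factor $\ep^2/(|s|^2+\ep^2)$ against a fixed $L^1$ majorant, so that dominated convergence yields the claim.

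Concretely, \eqref{eq:cusp} recasts the integral as
\[
\int_{X_0}\frac{\ep^2}{|s|^2+\ep^2}\,\om_A\wedge\om^{n-1} \;=\; \frac{1}{n}\int_{X_0}\frac{\ep^2}{|s|^2+\ep^2}\,(\tr_\om\om_A)\,\prod_i(|s_i|^2+\ep^2)^{a_i}\,e^{\psi+F}\,\om_P^n.
\]
To control $\tr_\om\om_A$ I would apply the Corollary to Proposition \ref{prop:est} to \eqref{eq:cusp}, writing the density $\om^n/\om_P^n=e^{\psi^+-\psi^-}$ with
\[
\psi^+ := \sum_{a_i>0} a_i\log(|s_i|^2+\ep^2)+\psi+F,\qquad \psi^- := -\sum_{a_i<0} a_i\log(|s_i|^2+\ep^2).
\]
All hypotheses can be checked uniformly in $\ep$: $\sup\psi^+\le C$ because $a_i>0$ in the first sum and $\psi$ is $C^0$-bounded by Kobayashi--Tian--Yau; $\ddc\psi^\pm\ge -C\om_P$ reduces to the standard bound $\ddc\log(|s_i|^2+\ep^2)\ge -\Theta_{h_i}$ together with $\ddc\psi=\om-\om_P\ge -\om_P$; and $\|e^{-\psi^-}\|_{L^p(\om_P^n)}=\|\prod_{a_i<0}(|s_i|^2+\ep^2)^{a_i}\|_{L^p(\om_P^n)}$ is uniformly bounded for some $p>1$, since each $a_i>-1$. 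Running the argument in Kobayashi--Tian--Yau quasi-coordinates (so that the cuspidal reference $\om_P$ plays the role of a uniformly bounded metric with bounded curvature), the Corollary gives $\om\ge M^{-1}e^{\psi^+}\om_P$; taking traces and using $\om_A\le C\om_P$, one obtains
\[
\tr_\om\om_A \;\le\; C\,\tr_\om\om_P \;\le\; C'\,e^{-\psi^+} \;\le\; C''\prod_{a_i>0}(|s_i|^2+\ep^2)^{-a_i}
\]
uniformly in $\ep$.

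Plugging this back, the factors $(|s_i|^2+\ep^2)^{a_i}$ with $a_i>0$ cancel exactly, and the integrand is majorised (up to a constant) by $\frac{\ep^2}{|s|^2+\ep^2}\prod_{a_i<0}(|s_i|^2+\ep^2)^{a_i}\,\om_P^n$. Since each $a_i>-1$ in the remaining product and $\om_P^n$ has only integrable cusp singularities, this family of majorants is dominated by a fixed $L^1$ function on $X$. Combined with the pointwise convergence $\ep^2/(|s|^2+\ep^2)\to 0$ off the measure-zero set $\{s=0\}$, dominated convergence concludes. The main obstacle I anticipate is the trace estimate: rigorously running Proposition \ref{prop:est} (or its Corollary) in the cuspidal/quasi-coordinate setting while keeping all constants genuinely uniform in $\ep$, despite the simultaneous presence of conic degenerations along the $\{s_i=0\}$ and cusp singularities along $D$.
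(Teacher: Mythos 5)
Your proposal is correct and follows essentially the same route as the paper: both hinge on the Chern--Lu lower bound $\om \ge C^{-1}\prod_{a_i>0}(|s_i|^2+\ep^2)^{a_i}\om_P$ (Proposition \ref{prop:est} transplanted to the complete cuspidal setting via quasi-coordinates and Yau's generalized maximum principle), then cancel the positive-exponent factors against the Monge--Amp\`ere density and conclude by integrability of $\prod_{-1<a_i<0}|s_i|^{2a_i}$ against the cusp volume. The one point to correct is the attribution of the uniform-in-$\ep$ bound $\sup|\psi|\le C$: Kobayashi and Tian--Yau only give existence and estimates for each fixed $\ep$, and this uniform $C^0$ estimate is precisely the non-trivial input the paper imports from \cite[Theorem A]{GW}; note also that both the upper and the lower bound on $\psi$ are needed (the lower bound to pass from $e^{-\psi^+}$ to $\prod_{a_i>0}(|s_i|^2+\ep^2)^{-a_i}$). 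Your endgame via a single dominated-convergence argument with the fixed majorant $\prod_{a_i<0}|s_i|^{2a_i}\,\om_P^n$ is a slight streamlining of the paper's case split, which treats the factor $\ep^2(|z|^2+\ep^2)^{-1-\delta}$ by an explicit rescaling $w=z/\ep$; both are valid.
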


\begin{proof}
Assume for the moment that we can prove that there is a constant $C$ depending only on $t>0$ such that 
\begin{equation}
\label{eq:min}
\om \ge C^ {-1} \prod_{a_i>0} (|s_i|^ 2+\ep^ 2)^ {a_i} \om_P
\end{equation}
In view of the equation \eqref{eq:cusp} satisfied by $\om$ and the above assumption, we have
\[\tr_{\om} (\om_A) \, \om^n \le C \, \frac{\prod_{a_i>-1}(|s_i|^2+\ep^2)^{a_i} e^{\psi} \om_P^n}{\prod_{a_i>0} (|s_i|^2+\ep^2)^{a_i}}\]
for some $C>0$ under control (depending on $\sup |F|$ and a constant $M>0$ such that $\om_A \le M \om_P$). We need here another non-trivial input, as we want to get rid of $\psi$. This is possible by invoking \cite[Theorem A]{GW} (or more precisely its proof) that provides a bound $\sup |\psi| \le C$ independent of $\ep$ (but depending on $t$). Combining these observations, we get: 
\[\frac{\ep^2}{|s|^2+\ep^2} \, \om_A \wedge \om^{n-1} \le  \frac{C\ep^2 }{|s|^2+\ep^2} \cdot  \prod_{-1<a_i<0} (|s_i|^2+\ep^2)^{a_i} \om_P^n \]
Checking the convergence to $0$ of our integral is now local on $\Supp(D+E)$. As this divisor has simple normal crossing support, we can use Fubini's theorem to reduce our problem to a one-dimensional one. We will be done once we've prove that both integrals
\[\int_{\mathbb D} \frac{\ep^2 \, i dz\wedge d\bar z}{(|z|^2+\ep^ 2)^{1+\delta}} \quad \mathrm{and} \quad \int_{\mathbb D} \frac{\ep^2 \, i dz\wedge d\bar z}{(|z|^2+\ep^ 2) \cdotp |z|^2 \log^2 |z|^2} \]
converge to $0$, for any $\delta \in (0,1)$, $\mathbb D$ being a small disc centered at $0 \in \CC$.  
For the first integral, we can perform the change of variable $w=z/\ep$, we get the integral \[\ep^{2(1-\delta)}\int_{|w|^2 \le 1/\ep} \frac{|dw|^2}{(1+|w|^2)^{1+\delta}} \]
which goes to zero as $1>\delta>0$. As for the second one, we know that $\ep^ 2 (|z|^2+\ep^ 2)^{-1} \le 1$, and that $(|z|^2 \log^2 |z|^2)^{-1} \in L^1(\mathbb D)$; we then just have to apply the dominated convergence theorem to conclude. 

The only thing left to prove now is \eqref{eq:min}. As \cite{GW} provides us with an $L^ {\infty}$ estimate on $\psi$, this inequality would follow from Proposition \ref{prop:est} if we could establish it for metrics $\om, \om'$ without the compactness assumption. It turns out that in our situation, the reference metric is $\om_P$ has bounded curvature tensor, and the unknown metric $\om$ is complete with (qualitatively) bounded Ricci curvature (even bounded holomorphic bisectional curvature) so that we can apply the generalized maximum principle of Yau and mimic the proof of Proposition \ref{prop:est} without any serious change. This ends the proof of \eqref{eq:min}, and hence of the lemma. \\
\end{proof}

\noindent
\textbf{Step 3. Computing the slopes using the singular metrics}\\
We start with a generically injective morphism $\F \to \txdf$. If $r=\mathrm{rk}(\F)$, then this morphism induces $j:\det \F\longrightarrow \Lambda^r \txdf$ where by definition, $\det \F:=(\Lambda^r \F)^{**} $. The strategy consists of endowing $T_X(-D)$ (as well as $\F$) with the hermitian metric induced by $\om$ and computing the slopes of these sheaves using the corresponding representatives of their first Chern classes. Two different types of difficulty appear though: first, as $\om$ is cuspidal along $D$, it induces a singular hermitian metric on $T_X(-D)$, and it is not clear that one can use it to compute any slope (whether for $T_X(-D)$ or $\F$). And secondly, as $\om$ is not Kähler-Einstein but only approximately, it is not clear that the error term in the slope inequality will vanish when the various regularizing parameters (essentially $\ep$ and $t$) will converge to $0$.\\

We start by addressing the first difficulty. In order to do so, we introduce the singularity set $W=W(\F)$ which is defined as the smallest analytic subset of $X$ outside which the sheaf morphism $\F\to \txdf$ is an injection of vector bundles. We denote by $F$ the vector bundle on $X\setminus W$ such that $\F=\Ox(F)$ there. Without loss of generality, one can assume that $\F$ is saturated in $\txdf$ (it is enough to consider such subsheaves to test the semi stability), so that $W$ has codimension at least two in $X$. 

\noindent
We denote by $h$ the smooth hermitian metric induced by $\om$ on $T_X(-D)$ over $X\setminus D$. It induces also a smooth hermitian metric on $F$ over $X\setminus (W\cup D)$, that we still denote by $h$. We claim that $h$ can be used
to compute the slope of $\F$:

\begin{prop}
\label{slope1}
We have:
\[\int_{X\setminus(W\cup D)}c_1(F,h) \wedge \om^{n-1}=c_1(\F) \cdotp (\pi^ *(K_Y+\Delta)+tA)^{n-1}\]
\end{prop}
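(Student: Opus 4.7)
The plan is to reduce to the rank-one case, compare $\det h$ with a smooth reference metric on $\det\F$, and dispose of the resulting error term by Stokes' theorem on tubular neighborhoods of $W\cup D$.

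By the Chern--Weil identity $c_1(F,h)=c_1(\det F,\det h)$ valid on $X\setminus(W\cup D)$, we reduce to the rank-one case. As $X$ is smooth, $L:=\det\F$ is a genuine line bundle on $X$, and the inclusion $\F\hookrightarrow\Txdf$ induces a holomorphic section $\iota$ of $\Lambda^r\Txdf\otimes L^*$ whose zero locus is exactly $W$. Fix a smooth Hermitian metric $h_0$ on $L$ and set $\vp:=-\log|\iota|^2_{\Lambda^r h\otimes h_0^*}$, so that $\det h=h_0 e^{-\vp}$ and
\[c_1(\det F,\det h)=c_1(L,h_0)+\ddc\vp\qquad\text{on }X\setminus(W\cup D).\]
From the explicit cusp model (the frame $t\d_t$ of $\Txdf$ has squared $h$-norm $\sim|\log|t|^2|^{-2}$ while the transverse frames have bounded norm), one derives the uniform estimate $|\vp|\le C\bigl(1+\log|\log|s_D|^2|\bigr)$ near $D$ together with at worst a log pole of order $\le 2c$ along $W$, where $c\ge 2$ is the complex codimension of $W$.

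The first integral equals the topological intersection number:
\[\int_X c_1(L,h_0)\wedge\om^{n-1}=c_1(L)\cdot\bigl(\pi^*(K_Y+\Delta)+tA\bigr)^{n-1}.\]
Indeed $c_1(L,h_0)$ is a smooth representative of $c_1(L)$ and $\om^{n-1}$ is a positive closed current on $X$ of finite mass (the cusp metric has finite volume) in class $\alpha^{n-1}$, with $\alpha:=\pi^*(K_Y+\Delta)+tA$; writing $\om=\om_{\mathrm{ref}}+\ddc u$ for a smooth Kähler representative $\om_{\mathrm{ref}}\in\alpha$ and integrating by parts against the closed smooth form $c_1(L,h_0)$ identifies this pairing with the cohomological one. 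Thus it only remains to prove that
\[\int_{X\setminus(W\cup D)}\ddc\vp\wedge\om^{n-1}=0.\]

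This vanishing is the main technical point. Setting $X_\ep:=X\setminus\bigl(U_\ep(W)\cup\{|s_D|<\ep\}\bigr)$, on which $\vp$ and $\om^{n-1}$ are smooth and $\om$ is closed, Stokes' theorem gives
\[\int_{X_\ep}\ddc\vp\wedge\om^{n-1}=\int_{\partial X_\ep}d^c\vp\wedge\om^{n-1}.\]
The contribution from $\partial U_\ep(W)$ is negligible: its area is $O(\ep^{2c-1})$ with $c\ge 2$, while $d^c\vp=O(\ep^{-1})$ and $\om^{n-1}$ restricts as a bounded form, giving a boundary term of order $O(\ep^{2c-2})\to 0$. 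The contribution from $\{|s_D|=\ep\}$ is more delicate: working in a local transverse coordinate $t$ to a smooth branch of $D$, the cuspidal factor $i\,dt\wedge d\bar t/(|t|^2\log^2|t|^2)$ of $\om$ has its $dr$-component vanishing on the tube $\{|t|=\ep\}$, so only the smooth part of $\om^{n-1}$ survives on the boundary and produces a bounded restriction; combined with the model identity $d^c\log|\log|t|^2|\bigr|_{|t|=\ep}=-d\theta/(2\pi|\log\ep|)$ and the upper bound on $|\vp|$, this forces the boundary integral to be of size $O(|\log\ep|^{-1})$, which tends to zero. The main obstacle is precisely that uniform bound on $|\vp|$ near $D$, which must hold irrespective of how the fibers of $F$ intersect the cuspidal directions of $\Txdf$; it is obtained by a direct computation in the explicit cusp frame, using in a crucial way that $\F$ is saturated in $\Txdf$.
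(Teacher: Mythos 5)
Your overall architecture (reduce to rank one, write $c_1(F,h)=c_1(\det\F,h_0)+\ddc\vp$ with $\vp=-\log H$, pair the smooth term cohomologically, kill the $\ddc\vp$ term) is the same as the paper's, but your method for the vanishing of $\int\ddc\vp\wedge\om^{n-1}$ is genuinely different: you run Stokes on shrinking tubes around $W$ and around $D$ and estimate the two boundary terms separately, whereas the paper avoids boundary estimates entirely by showing that (after pulling back under a log resolution of the ideal sheaf cut out by the section $\iota$ together with $D$, and after adding a large multiple of the Poincar\'e potential $\vp_P$ and subtracting $\log|\sigma|^2$ for $\sigma$ cutting out the exceptional divisor) the function $\log H$ becomes a quasi-psh function of finite energy in the sense of \cite{GZ07}, for which $\int\ddc(\cdot)\wedge\om^{n-1}=0$ is automatic. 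Your two individual boundary estimates are essentially correct in the regions where only one singularity is present: the $O(\ep^{2c-2})$ bound on the tube around $W$ away from $D$ is the classical argument (item $(\ast\ast)$ of \cite[Theorem 8.3]{Koba}), and the $O(|\log\ep|^{-1})$ bound on $\{|s_D|=\ep\}$ away from $W$ is plausible, granting the quasi-coordinate bounded-geometry description of $\om$ (mere quasi-isometry with the model cusp metric does not by itself imply that ``only the smooth part of $\om^{n-1}$ survives on the boundary''; you need the Kobayashi/Tian--Yau statement that the coefficients of $\om$ in the cusp coframe are bounded).

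The genuine gap is at $W\cap D$, which is precisely the case the paper isolates as the crux (its third bullet). Your two estimates do not combine there: on the portion of $\partial U_\ep(W)$ approaching $D$, the restriction of $\om^{n-1}$ is \emph{not} bounded (it carries the cuspidal factor $(|t|^2\log^2|t|^2)^{-1}$ transverse to $D$), so the $O(\ep^{2c-2})$ count fails; and on the portion of $\{|s_D|=\ep\}$ approaching $W$, the bound $|\vp|\le C(1+\log|\log|s_D|^2|)$ and the model identity for $d^c\vp$ both break down, since $\vp$ acquires an additional pole along $W$ and $d^c\vp$ an additional $1/\mathrm{dist}(\cdot,W)$ singularity. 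Moreover, $W$ can be tangent to $D$ to arbitrarily high order, so there is no normal form for the pair $(W,D)$ in which to set up the tube geometry --- this is exactly why the paper first passes to a log resolution $\nu:\tilde X\to X$ making $\Exc(\nu)\cup D'$ snc before doing any analysis. Two smaller points: the claimed pole order $\le 2c$ for $\log H$ along $W$ is unjustified (the vanishing order of $\iota$ along $W$ depends on $\F$, not on $\mathrm{codim}\,W$), although your Stokes estimate does not actually need it; and saturation of $\F$ is used to guarantee $\mathrm{codim}\,W\ge 2$, not for the zero-order bound on $\vp$ near $D\setminus W$, which only uses that $\F$ is a subbundle of $\Txdf$ there.
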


\begin{proof}
This is far from being obvious, especially because of the singularities of $\om$ along $D$. On $X\setminus(W\cup D)$, $h$ induces an hermitian metric $\hr$ on $\Lambda^rF$ that we aim to compare with a smooth hermitian metric $h_0$ on $\det \F$ \--- notice that $\Lambda^rF$ and $\det \F$ coincide on $X\setminus(W\cup D)$. We are now going to analyze the behavior of $\hr/h_0$ on $W\cup D$. More precisely, if $\xi$ denotes a local trivialization of the line bundle $\det \F$, then the quantity $\hr(\xi)/h_0(\xi)$ is independent of the choice of $s$ so it induces a smooth positive function $H$ on $X\setminus(W\cup D)$, and obviously $\ddc \log H = c_1(\det \F, h_0)-c_1(F,h)$, so everything boils down to showing that $\int_{X\setminus(W\cup D)}\ddc \log H \wedge \om^{n-1}=0$. For the sake of clarity, we will distinguish three cases, even if they could be treated in a unified manner:\\

$\bullet$ Near a point $x\in W\setminus D$, then the morphism $j:\det \F \to \Lambda^rT_X(-D)$ is degenerate. Choose trivialization of all bundles at stake. Then there are holomorphic function $(f_1, \ldots, f_p)$ where $p=C^r_n$ such that $j=(f_1,\ldots, f_p)$. Then $H=|j|_h^2$ is the squared norm (for some hermitian product on $\mathbb C^p$) of a vector of holomorphic functions, and we claim that $\log H$, possibly after a modification, is the sum of a smooth function and a pluriharmonic function outside $(j=0)$ \--- which is nothing else than $W$ intersected with our trivializing chart. To see that, we choose a log resolution $\nu$ of the ideal $(f_1, \ldots, f_p)$ which produces (above) a holomorphic function $g$ and a trivial ideal sheaf $(g_1, \ldots, g_p)$ such that $f_i=g_i g$ for each $i$. Therefore, $\pi^*\log H = \log \sum |g_i|^2+ \log |g|^2$, and the first summand extends smoothly across the exceptional divisor $(g=0)= \nu^{-1}(W)$. \\

$\bullet$ Near a point $x\in D\setminus W$, the hermitian metric $h$ is degenerate but $\F$ is a genuine subbundle of $T_X(-D)$. A local holomorphic frame of $T_X(-D)$ is given by $z_1 \frac{\d}{\d z_1}, \ldots, z_k \frac{\d}{\d z_k},  \frac{\d}{\d z_{k+1}}, \ldots,  \frac{\d}{\d z_n}$. Also, $\om$ is (possibly non uniformly in $t,\ep$ though) equivalent to the standard Kähler metric with cuspidal singularities along $D$. Therefore, in these coordinates $h$ is equivalent to the matrix
$h_{\rm cusp}:=\mathrm{diag}((-\log |z_1|^2)^{-2}, \ldots, (-\log |z_k|^2)^{-2}, 1, \ldots, 1)$. Also, near $x$, $\Lambda^r F$ is locally generated by one element $\xi \in \Lambda^r T_X(-D)$, 
\[\xi = \sum_{I=(i_1, \ldots, i_r)} a_I \prod_{\substack{i\in I\\ i\le k}}z_i \frac{\d}{\d z_{i_1}}\wedge \cdots \wedge \frac{\d}{\d z_{i_r}} \] where $I$ runs among all (unordered) $r$-tuples of $\{1, \ldots, n\}$. With respect to $\Lambda^r h_{\rm cusp}$, the squared norm of $\xi$ is equal to $\sum_I |a_I|^2 \prod_{i\in I\cap \{1, \ldots, k\}} (-\log |z_i|^2)^{-2}$. We choose a $r$-tuple $I$ such that $a_I(x)\neq 0$. One can assume that $a_I$ does not vanish up to shrinking the neighborhood we are working on. As $\hr$ is equivalent to $\Lambda^r h_{\rm cusp}$, we infer that 
\begin{equation}
\label{minor1}
\log |\xi |^2_{\hr} \ge \sum_{i\in I\cap \{1, \ldots, k\}} -\log \log^2 |z_i|^2-C
\end{equation}
for some given constant $C>0$. Remember that our ultimate goal is to show that $\int_{X\setminus(W\cup D)} \ddc \log H \wedge \om^{n-1}=0$; a way to prove this is to prove that $\log H$ is regular enough so that this integral is actually cohomological. Of course, $\log H$ is not smooth (or even bounded), but the above inequality tends to suggest that it has finite energy in the sense of \cite{GZ07}. But a zero order bound like this does not quite suffice to support this claim, as we also crucially need to bound its complex Hessian from below. As $\xi$ is a (local) holomorphic section over $X\setminus D$ of the hermitian bundle $(\Lambda^rT_X,h)$, one can write there: 
$$ \ddc \log |\xi|^2_{h} = \frac{1}{|\xi|^2_{h}}\left( |D'\xi|^2-\frac{|\la D'\xi, \xi\ra |^2}{|\xi|^2_{h}} \right) - \frac{\la\Theta_h(\Lambda^rT_X) \xi, \xi\ra}{|\xi|^2_{h}}$$
where $\Theta_h(\Lambda^r T_X)$ is the Chern curvature tensor of $(\Lambda^rT_X,h)$. As $|\la D'\xi, \xi\ra |^2\le  |D'\xi|^2\cdotp |\xi|^2$, the term in the parenthesis of the right hand side is non-negative. Moreover, we know from \cite{KobR, Tia} that $(X\setminus D, \om)$ has bounded geometry, so in particular $\om$ has bounded holomorphic bisectional curvature. Therefore the curvature tensor of $(\Lambda^r T_X, h)$ is bounded too, so that there is a constant $C$ (depending on $\ep$ and $t$) such that $\Theta_h(\Lambda^rT_X) \le C \om \otimes \mathrm{Id}_{\Lambda^r T_X}$ in the sense of Griffiths semipositivity, so in particular we get 
\begin{equation}
\label{minor2}
\ddc \log |\xi|^2_{h}\ge -C \om
\end{equation}
Still one cannot conclude yet that $\log H$ is in a finite energy class. Indeed, all we wrote requires $\xi$ not to vanish (so it applies as long as we are far away from $W$) and also the above inequality \eqref{minor2} does not say that $\log H$ is quasi-psh as $\om$ is certainly not dominated by a smooth Kähler form on $X$. The first issue will be addressed by using the methods of the first item above; as for the second, if we introduce $\vp_P:=\sum -\log \log^2 |s_i|^2$ where the $s_i$'s are sections cutting out the components of $D$ (and the norms being taken with respect to arbitrary smooth hermitian metrics), then $\ddc \vp_P$ is quasi-psh and there exists $\om_0$ a fixed smooth Kähler metric on $X$ such that $\om_0+\ddc \vp_P$ has cusp singularities along $D$. In particular there exists $C>0$ (depending on $t,\ep$ once again) such that $\om_0+\ddc \vp_P \ge C^{-1}\om$. Also, it follows from \cite[Proposition 2.3]{G12} that $\vp_P \in \mathcal E(X, \om_0)$, i.e. it has finite energy in the sense of \cite{GZ07}. 

If $W$ were empty, then we would have that for some big constant $C>0$, $\log H+C\vp_P$ is a quasi-psh function dominating a multiple of $\vp_P$ up to a constant, so it has finite energy too. Also, $\om$ is a finite energy current (its potential is equal to $\vp_P+O(1))$, therefore $\int_{X\setminus D} \ddc(\log H + C\vp_P) \wedge \om^{n-1}=0$. By linearity and as $\vp_P$ has finite energy, we deduce $\int_{X\setminus D} \ddc \log H \wedge \om^{n-1}=0$ which had to be showed. \\

$\bullet$ We deal with the general case now. The morphism $j: \det \F \to \Lambda^rT_X(-D)$ induces a non-zero section $t$ of $ \Lambda^rT_X(-D) \otimes (\det \F)^{-1}$. We choose a log resolution $\nu : \tilde X \to X$ of the ideal sheaf defined by $t$ and $D$, which is co-supported on $W\cup D$. Let us call $D'$ the strict transform of $D$, and analyze the behavior of $\nu^*( \log H)$ along $\Exc(\nu) \cup D'$. By the previous two items, $\nu^*(\log H)$ dominates a (global) finite energy function locally outside $\Exc(\nu) \cap D'$, and its complex Hessian is controlled below by a (negative constant) times a cuspidal metric along $D'$. So let us now see what happens at a point $x \in \Exc(\nu) \cap D'$. Pick a small neighborhood $U$ of $x$ and let $\xi$ be a holomorphic trivialization of $\det \F$ on $U$. Over $U\setminus(W\cup D)$, $\xi$ is a non-vanishing section of $\Lambda^r T_X(-D)$ that can be written $\xi = \sum_{I=(i_1, \ldots, i_r)} a_I \prod_{\substack{i\in I\\ i\le k}}z_i \frac{\d}{\d z_{i_1}}\wedge \cdots \wedge \frac{\d}{\d z_{i_r}} $ as above. Of course the ideal sheaf defined by $t$ is given by $(a_I)$ on $U$. Moreover, as $\om$ is equivalent to a cusp metric along $D$, 
\begin{equation}
\label{comp}
\log |\xi|^2_h = \log \left(\sum_I |a_I|^2 \prod_{i\in I\cap \{1, \ldots, k\}} (-\log |z_i|^2)^{-2}\right)+O(1)
\end{equation}
We write $\nu^*a_I = g_Ig$ where $g$ cuts out the exceptional divisor on $\nu^{-1}(U)$ and $(g_I)$ is a trivial ideal sheaf on this open set. Pick now a point $\tilde x$ above $x$, i.e. $\nu(\tilde x)=x$. There exists $I$ such that $g_I(\tilde x)\neq 0$; moreover, as $\nu^*D$ is an snc divisor, there are holomorphic coordinates $(\tilde z_1, \ldots, \tilde z_n)$ around $\tilde x$ such that each of the $\nu^*z_i$ ($1\le i \le k$) is a monomial in the $\tilde z_j$'s. Therefore, around $\tilde x$, we have: 
$$\nu^*\log |\xi|^2_h \ge \log |g|^2+ \sum_{i\in I\cap \{1, \ldots, k\}} -\log\log^2 |\nu^*z_i|^2-C$$ Take for example $\nu^*z_1$; by the observations above, we have $\nu^*z_1 = \tilde z_1^{a_1} \cdots \tilde z_n^{a_n}$ for some nonnegative integers $a_1, \ldots, a_n$. Then $-\log \log^2 |\nu^*z_i|^2 = -2 \log (-\sum_{i=1}^n a_i \log |\tilde z_i|^2)$ which by concavity of the logarithm is bigger than $\frac{1}{\sum a_i} \sum_{i=1}^n -\log \log^2 |\tilde z_i|^2$. In the end, we obtain:
\begin{equation}
\label{comp2}
\nu^*\log |\xi|^2_h \ge\log |g|^2 + k\sum_{i=1}^n -\log \log^2 |\tilde z_i|^2-C
\end{equation}
Let's gather all what has been said so far. Take $\sigma$ a section cutting out the exceptional divisor $E$ of $\nu$, that we are going to measure with respect to an arbitrary smooth hermitian norm $h_E$. Let us set as before $\vp_P:=\sum -\log \log^2 |s_i|^2$ and we consider the function $$\psi:=\nu^*\left(\log \frac{|\xi|^2_h}{|\xi|^2_{h_0}}+C\vp_P\right)-\log |\sigma|^2$$ on $\nu^{-1}(X\setminus(W\cup D))$. We claim that for $C$ big enough, $\psi$ extends to a quasi-psh function on $\tilde X$ having finite energy. Indeed by the bound $\ddc \log |\xi|^2_h \ge -C \om$, we get that there exists a Kähler metric $\om_0$ on $X$ such that $\ddc (\log |\xi|^2_h+C\vp_P) \ge -\om_0$ outside $W\cup D$, for $C$ big enough. Therefore $\ddc \psi \ge -\tilde \om_0$ on $\nu^{-1}(X\setminus(W\cup D))$ for some Kähler form $\tilde \om_0$ on $\tilde X$.  As $\psi$ is bounded above thanks to \eqref{comp}, it therefore extends as a quasi-psh function on $\tilde X$. Moreover, we retrieve from \eqref{comp2} that near each point of the exceptional divisor (or $D'$), $\psi$ dominates a function satisfying the criterion \cite[Lemma 2.9]{BBGZ} thanks to \cite[Proposition 2.3]{G12}, so $\psi$ satisfies this criterion globally, hence has finite energy. Therefore, 
\[\int_{\nu^{-1}(X\setminus(W\cup D))} \ddc \psi \wedge (\nu^*\om)^{n-1}=0\]
Moreover, $\vp_P$ has finite energy, and $\ddc \log |\sigma|^2= -\Theta_{h_E}(E)$ outside $E$ extends smoothly across across the exceptional divisor (and $D$). As a result, we have
\begin{eqnarray*}
\int_{X\setminus(W\cup D)}\ddc H \wedge \om^{n-1} & = & \int_{\nu^{-1}(X\setminus(W\cup D))} \nu^*\ddc H \wedge (\nu^*\om)^{n-1} \\
&=&\int_{\nu^{-1}(X\setminus(W\cup D))} \ddc[ \psi+\log |\sigma|^2-C\vp_P] \wedge (\nu^*\om)^{n-1}\\
&=&-\int_{\tilde X} \Theta_{h_E}(E) \wedge (\nu^*\om)^{n-1}
\end{eqnarray*}
which vanishes as $E$ is $\nu$-exceptional and $\nu^*\om$ is a finite energy current.
\end{proof}

So we know now that we can use $h$ to compute the slope of $\F$. But outside $W\cup D$, $\F=\Ox(F)$ is a subbundle of $T_X$, so the Chern curvature tensors of each of the hermitian bundles can be related using the second fundamental form $\beta \in \Omega^{1,0} \otimes \mathrm{Hom}(F, F^{\perp})$:
\[\Theta_h(F) = \pr_F( \Theta_h(T_X)_{|F})+\beta^*\wedge \beta\]
where $\pr_F$ is the orthogonal projection onto $F$ of the endomorphism part. Taking the trace (as endomorphism), and then wedging with $\om^{n-1}$, we get:
\begin{equation}
\label{sff}
c_1(F,h) \wedge \om^{n-1} = \tr_{\rm End}(\pr_F(\tr_{\om} \Theta_h(T_X)_{|F})) \, \om^n/n + \tr_{\rm End}(\beta^*\wedge \beta \wedge \om^{n-1} )
\end{equation}
and integrating this identity over $X\setminus(W\cup D)$, we get thanks to Proposition \ref{slope1}:
\begin{equation}
\label{prein}
n \,  c_1(\F) \cdotp  (\pi^ *(K_Y+\Delta)+tA)^{n-1} \le \int_{X\setminus(W\cup D)} \tr_{\rm End}(\pr_F(\tr_{\om} \Theta_h(T_X)_{|F})) \, \om^n 
\end{equation}
The next goal is then to compute the integral in the right hand side and express it in terms of cohomological quantities:
\begin{prop}
\label{tang}
The integral 
\[\int_{X\setminus(W\cup D)} \tr_{\rm End}(\pr_F(\tr_{\om} \Theta_h(T_X)_{|F})) \, \om^n\]
converges to $- r (K_Y+\Delta)^n $ when $\ep$ and then $t$ converge to $0$.
\end{prop}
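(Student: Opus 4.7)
The plan is to use the Kähler-Einstein-type equation \eqref{eq:ricci} satisfied by $\om$ to compute the integrand pointwise on the smooth locus $X_0=X\sm D$, and then to show that in the limit only the leading constant contribution survives, with Lemma \ref{lem2} controlling the boundary-divisor error terms.

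First I would identify the endomorphism $\tr_\om \Theta_h(T_X)\in \mathrm{End}(T_X)$. By the symmetry $R_{i\bar j k\bar l}=R_{k\bar j i\bar l}$ of the Chern curvature of a Kähler metric, this is exactly the Ricci endomorphism $\widetilde{\Ric \om}$: the endomorphism $\tilde\eta$ of $T_X$ defined by $\om(\tilde\eta u,\bar v)=\eta(u,\bar v)$, applied to $\eta=\Ric\om$. Since $\om$ is smooth and Kähler on $X_0$, equation \eqref{eq:ricci} reads there $\Ric \om=-\om+t\om_A-\sum_{a_i>-1}\theta_{i,\ep}$, and applying $\pr_F(\cdot)_{|F}$ followed by $\tr_{\rm End}$ yields the pointwise identity
\[
\tr_{\rm End}(\pr_F(\tr_\om \Theta_h(T_X)_{|F}))=-r+t\,\tr_{\rm End}(\pr_F(\widetilde{\om_A})_{|F})-\sum_{a_i>-1}\tr_{\rm End}(\pr_F(\widetilde{\theta_{i,\ep}})_{|F}).
\]
Integrating against $\om^n$ over $X\sm(W\cup D)$, the leading term produces $-r\int_X\om^n=-r(\pi^*(K_Y+\Delta)+tA)^n$, which tends to $-r(K_Y+\Delta)^n$ as $t\to 0$ — the claimed limit.

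The $t$-correction is easily disposed of: the bounds $0\le \tr_{\rm End}(\pr_F(\widetilde{\om_A})_{|F})\le \tr_\om\om_A$ yield an integral at most $nt\int\om_A\wedge\om^{n-1}=ntA\cdot(\pi^*(K_Y+\Delta)+tA)^{n-1}=O(t)$, vanishing as $t\to 0$. The heart of the argument — and its main obstacle — is controlling the $\theta_{i,\ep}$-contribution. I would fix smooth hermitian metrics $h_i$ on $\Ox(E_i)$ and sections $s_i$ cutting out $E_i$, set $\theta_{i,\ep}=a_i\alpha_{i,\ep}$ with $\alpha_{i,\ep}:=\Theta_{h_i}+\ddc\log(|s_i|^2+\ep^2)$, and derive via direct computation the decomposition
\[
\alpha_{i,\ep}=\frac{\ep^2}{|s_i|^2+\ep^2}\,\Theta_{h_i}+v_{i,\ep},\qquad v_{i,\ep}\ge 0,
\]
the positivity of $v_{i,\ep}$ coming from Cauchy-Schwarz. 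The contribution of the first summand is bounded, using $|\Theta_{h_i}|\le C\om_A$, by a constant times $\int\frac{\ep^2}{|s_i|^2+\ep^2}\om_A\wedge\om^{n-1}$, which vanishes as $\ep\to 0$ by Lemma \ref{lem2}. For the $v_{i,\ep}$-piece, its positivity gives $|\tr_{\rm End}(\pr_F(\widetilde{v_{i,\ep}})_{|F})|\le \tr_\om v_{i,\ep}$, so the corresponding integral is controlled by $n|a_i|\int v_{i,\ep}\wedge\om^{n-1}$. Rewriting $v_{i,\ep}=\alpha_{i,\ep}-\frac{\ep^2}{|s_i|^2+\ep^2}\Theta_{h_i}$, the cohomological piece $\int\alpha_{i,\ep}\wedge\om^{n-1}=c_1(E_i)\cdot(\pi^*(K_Y+\Delta)+tA)^{n-1}$ converges to $c_1(E_i)\cdot\pi^*(K_Y+\Delta)^{n-1}=0$ as $t\to 0$ (by the projection formula, since each $E_i$ with $a_i>-1$ is $\pi$-exceptional), while the remaining piece again vanishes by Lemma \ref{lem2}. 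Taking $\ep\to 0$ first and then $t\to 0$ wipes out both correction sums, leaving exactly $-r(K_Y+\Delta)^n$.
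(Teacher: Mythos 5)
Your proposal is correct and follows essentially the same route as the paper's proof: the identification of $\tr_{\om}\Theta_h(T_X)$ with the Ricci endomorphism, the three-term split coming from \eqref{eq:ricci}, the $O(t)$ bound on the $\om_A$-term, and the treatment of the $\theta_{i,\ep}$-contribution by splitting off the curvature piece $\frac{\ep^2}{|s_i|^2+\ep^2}\Theta_{h_i}$ (handled by Lemma \ref{lem2}) from the nonnegative remainder, which is then controlled by rewriting it as a fixed cohomology class minus the curvature piece and using $\pi$-exceptionality as $t\to 0$. This is exactly the paper's decomposition $\theta=a(\beta+\gamma)$ with your $v_{i,\ep}$ playing the role of $\beta$, so there is nothing to add.
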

\noindent
Before starting the proof of the proposition, let us try understand what is $\tr_{\om} \Theta_{h}(T_X)$. We choose geodesic coordinates $(z_i)$ for $\om$ around some point $x_0$, so that
\[\Theta(T_X)_{x_0}=\sum_{j,k,l,m} R_{j\bar k l \bar m} dz_j \wedge d \bar z_k \otimes \left(\frac{\partial}{\partial z_l }\right)^* \otimes \frac{\partial}{\partial \bar z_m } \]
In particular, 
\[\tr_{\om} \Theta(T_X)_{x_0}=\sum_{j,l,m} R_{j\bar j l \bar m} \left(\frac{\partial}{\partial z_l }\right)^* \otimes \frac{\partial}{\partial \bar z_m } \]
and using the Kähler symmetry $R_{j\bar j l \bar m}=R_{l \bar m j\bar j}$, we find 
\[\tr_{\om} \Theta(T_X)_{x_0}=\sum_{j,l,m} R_{l \bar m j\bar j} \left(\frac{\partial}{\partial z_l }\right)^* \otimes \frac{\partial}{\partial \bar z_m } \]
It will be useful to introduce the operator $\sharp$ (relatively to $\om$) which associates to any $(0,1)$-form $\alpha$ a $(1,0)$ vector $\sharp \alpha$ by \[\alpha(\bar u)=\omvp(\sharp \alpha , u)\] for every $u\in T_X$. This operator extends to bundle-valued forms, and one can check easily that if $\alpha$ is a $(1,1)$-form, then $\sharp \alpha$ is an endomorphism of $T_X$ satisfying \[\tr(\sharp \alpha)=\tr_{\om} \alpha.\] If we recall that $\Ric \om = \sum_{j,l,m} R_{l\bar m j \bar j} dz_l \wedge d \bar z_m$, the computation above can also reformulated as 
\begin{equation}
\label{eq:op}
\tr_{\om} \Theta(T_X) = \sharp \Ric 
\end{equation}
We can now resume to the proof:

\begin{proof}[Proof of Proposition \ref{tang} ]
Over $X\setminus D$, our metric $ \om $ satisfies the equation \eqref{eq:ricci}:
\[\Ric \om = -\om +t\om_A-\sum_{a_i>-1} \theta_{i,\ep}\]
therefore $\tr_{\om} \Theta(T_X) = -\sharp \om + t \sharp \om_A - \sum_{a_i>-1} \sharp \theta_{i,\ep}$, and we have three terms to deal with:\\

$\bullet$ Of course, $\sharp \om = \mathrm{Id}_{T_X}$, so that $\tr_{\rm End}(\pr_F(\sharp \om)_{|F})) \, \om^n = r \om^n$, which when integrated over $X\setminus(W\cup D)$ yields $r (\pi^*(K_Y+\Delta)+tA)^n$ as $\om$ has finite energy. So our task is to show that as $\ep$ and $t$ go to zero, the other terms vanish once properly contracted. \\

$\bullet$ As for the second term, $\om_A$ is a positive form, so $\sharp \om_A$ is a positive endomorphism of $T_X$. Hence:
\begin{eqnarray*}
\tr\left(\mathrm{pr}_{F}( (\sharp \om_A)_{|F})\right) \om ^n &\le &\tr(\sharp \om_A) \,\om^n \\
& = & \tr_{\om} \om_A \, \om^n \\
& = & n \om_A \wedge \om^{n-1}
\end{eqnarray*}
and $t \int_{X\setminus(W\cup D)} n\om_A\wedge \om^{n-1}=t n A\cdotp  (\pi^*(K_Y+\Delta)+tA)^{n-1}$ is independent of $\ep$ and converges to $0$ when $t\to 0$. As a result, so does the non-negative integral $\int_{X\setminus(W\cup D)} \tr\left(\mathrm{pr}_{F}( (\sharp \om_A)_{|F})\right) \om ^n$.\\

$\bullet$ The last term is the more subtle to deal with. We are going to show that for each $i$ such that $a_i>-1$, the integral $\int_{X\setminus(W\cup D)}\tr_{\rm End}(\pr_F(\sharp \theta_{i,\ep})_{|F})) \, \om^n $ converges to zero. Remember that $ \theta_{i,\ep} = a_i\left(\frac{ \ep ^2 |D's_i|^2}{(|s_i|^2+\ep^2)^2}+\frac{\ep^2 \Theta_i}{|s_i|^2+\ep^2}\right)$; for the sake of clarity, let us drop the index $i$ and decompose $\theta$ as $\theta = a(\beta+\gamma)$ where $\beta:= \frac{\ep^2 |D's|^2}{(|s|^2+\ep^2)^2}$, and $\gamma =  \frac{\ep^2 \Theta}{|s|^2+\ep^2}$.

\begin{enumerate}
\item[$\cdotp$] Start with $\gamma$. One can choose $C>0$ big enough such that $\pm \Theta \le C \om_A$. As the $\sharp$ operator respects positivity (and hence inequalities), we have 
\begin{eqnarray*}
\left |\tr_{\rm End}(\pr_F(\sharp \gamma)_{|F})) \right |  \om^n &\le& \frac{C \ep^2}{|s|^2+\ep^2} \, \tr\left(\mathrm{pr}_{F}( (\sharp \om_A)_{|F})\right) \om ^n\\
&\le & \frac{nC \ep^2}{|s|^2+\ep^2} \, \om_A \wedge \om^{n-1}
\end{eqnarray*} 
hence 
\begin{equation*}
\left |\int_{X\setminus(W\cup D)} \tr_{\rm End}(\pr_F(\sharp \gamma)_{|F})) \, \om^n \right |  \le C' \int_{X\setminus(W\cup D)} \frac{\ep^2}{|s|^2+\ep^2} \, \om_A \wedge \om^{n-1}
\end{equation*} 
which converges to zero as $\ep$ goes to zero thanks to Lemma \ref{lem2}.\\

\item[$\cdotp$] The term involving $\beta$ cannot be treated in the same way, as it explodes too fast. However, we are going to take advantage of the facts that $\beta$ has a sign and that $\beta+\gamma$ belongs to a fix \textit{contractible} cohomological class. So we have similarly as before:
$$ \tr_{\rm End}(\pr_F(\sharp \beta)_{|F}))   \om^n \le n\beta \wedge \om ^{n-1} $$
Writting $\beta$ as $(\beta+\gamma)-\gamma$, we get:
\[ \int_{X\setminus(W\cup D)} \tr\left(\mathrm{pr}_{F}( (\sharp \beta)_{|F})\right) \om ^n \le C \int_{X\setminus(W\cup D)} \theta \wedge \om^{n-1}+ \frac{\ep^2}{|s|^2+\ep^2} \,  \om_A \wedge \om^{n-1}\]
Using Lemma \ref{lem2} and the fact that $\theta$ is smooth and $\om$ has finite energy, we obtain that the right hand side converges to 
\[C \,E \cdotp (\pi^*(K_Y+\Delta)+tA)^{n-1} \]
when $\ep$ goes to zero, $E$ being the ($\pi$-exceptional) divisor representing the cohomology class of $\theta$. Therefore, our (non-negative) integral converges to $0$ as $t$ goes to $0$, which ends the proof of Proposition \ref{tang}
\end{enumerate}
\end{proof}

\noindent
Combining the inequality \eqref{prein} with Proposition \ref{tang}, we have shown that the slope of $\F$ is less than or equal to the slope of $T_X(-D)$, which concludes the proof of the semistability of $\mathscr T_{Y}(-\Delta)$ with respect to $K_Y+\Delta$.\\

\noindent
\textbf{Step 4. Polystability}\\
Let us prove that the sheaf $\mathscr T_Y(-\Delta)$ is polystable with respect to $K_Y+\Delta$. Using an induction argument, it would be enough to prove that whenever there exists a sheaf $\G \subset \mathscr T_Y(-\Delta)$  with same slope as $\mathscr T_Y(-\Delta)$, then there also exists another sheaf $\G'$ such that  $\mathscr T_Y(-\Delta)\simeq \G\oplus \G'$. Indeed, if $\G$ is chosen of minimal rank amongst the subsheaves of maximal slope, then $\G$ has to be stable, and we can run the whole argument with $\G'$ instead of $\mathscr T_Y(-\Delta)$.

So let us assume the existence of such a subsheaf $\G$. By the arguments of Step 1, we can find a subsheaf $\F$ of $\txdf$ with same slope as $\txdf$, and that we may assume saturated (hence reflexive). 
Let us set $V=\pi^{-1}((Y,\Delta)_{\rm reg})=X\setminus\Supp(E)$, $V_0:=V\setminus D =X\setminus \Supp(D+E)$, and finally $U_0=V_0\setminus W$.\\

Going back to inequality \eqref{sff}; combining it with Proposition \ref{tang} and the fact that $\F$ has same slope as $\txdf$, we obtain that the integral
\begin{equation}
\label{eq:tr}
\int_{U_0} \tr_{\rm End}(\beta_{t,\ep}^*\wedge \beta_{t,\ep}) \wedge \om_{t,\ep}^{n-1}
\end{equation}
converges to zero when $\ep$ and then $t$ go to zero. Here $\beta_{t,\ep}$ is the second fundamental form of $F\subset T_X$ with respect to the hermitian metric induced by $\om_{t,\ep}$.

\noindent
The crucial input we need now is the main Theorem of \cite{BG} which asserts there exists a Kähler metric $\omi$ on $V_0$ such that $\om_{t,\ep}$ converges to $\omi$ in the topology of $\mathscr C^{\infty}$ convergence on the compact subsets of $V_0$. 
Let $\bi$ the second fundamental form of $F\subset T_{X}$ induced by the Kähler metric $\omi$ on $U_0$. By the smooth convergence of $\om_{t,\ep}$ to $\omi$ on the compacts of $V_0$, Fatou's lemma applied to \eqref{eq:tr} shows that: 
\[ \int_{U_0} \tr( \bi^* \wedge \bi) \wedge \omi^{n-1} =0 \]
so that $\tr( \bi^* \wedge \bi)$ and hence $\bi$ vanishes on $U_0$. In particular, we get an holomorphic splitting \begin{equation}
\label{split}
\Txdf_{|U_0} \simeq \F_{|U_0} \oplus \F_{|U_0}^{\perp}
\end{equation}
We would like to extend this decomposition to $V=\pi^{-1}((Y, \Delta)_{\rm reg})$ and push it forwards to $(Y, \Delta)_{\rm reg}$ where it would extend to $Y$ by reflexivity. The difficulty here is that $U_0\subset V$  has codimension $1$ because of $D$. So we have to show first how extend the splitting across $D$ and then use reflexivity arguments to get it on $V$. \\

Another way to reformulate \eqref{split} is to say that there is an holomorphic surjection of vector bundles $p:\Txdf \longrightarrow \F$ over $U_0$. Or better, we can view it as morphism $p:\Txdf \longrightarrow \Txdf$ over $U_0$ satisfying $p^2=p$, and whose norm is less that $1$, ie for every $v\in \Txdf$, we have  $|p(v)|_{\omi} \le |v|_{\omi}$. We want to extend $p$ to $U\cap V$. The additional information we need lies in \cite[Theorem B]{GW} where it is shown that $\omi$ is equivalent to a cusp metric along $D$ on $V$. Now we choose a point $x\in D\cap V$, such that $D$ is locally given by $D=(z_1\cdots z_k=0)$ for some coordinates $(z_{i})$ around $x$. Then 
\[e_i:=\begin{cases}
z_i \frac{\d}{\d z_i} &  \text{if $1 \le i \le k$} \\
\frac{\d}{\d z_i} & \text{if $ i > k$} 
\end{cases}
\]
defines an holomorphic frame of $\Txdf$ around $x$. By the previously cited result, the $\omi$-norm of $e_i$ is equivalent to $(\log \frac{1}{|z_i|^{2}})^{-1}$ if $1 \le i \le k$, and $1$ if $i>k$. Let us  consider $(p_{ij}(z))$  the matrix of $p$ with respect to the basis $(e_i)$.
We now that there exists a constant $C$ such that $|p(e_i)|_{\omi} \le C$ for all $i$. One the other hand, as $\omi$ is equivalent to the cusp metric, we have
\begin{eqnarray*}
|p(e_i)|_{\omi}^2&=&\left|\sum_{j=1}^n p_{ij}(z) e_j \right|_{\omi}^2 \\
& \ge & C^{-1} \left( \sum_{j=1}^r  \frac{|p_{ij}(z)|^2}{\log^{2} \frac{1}{|z_i|^{2}}} + \sum_{j=r+1}^n |p_{ij}(z)|^2 \right)
\end{eqnarray*}
Therefore, $|p_{ij}(z)|$ is uniformly bounded near $x$ if $j>k$, and $|p_{ij}(z)| \le \log^{2} \frac{1}{|z_i|^{2}}$ if $1\le j \le k$. In any case, $p_{ij}$ is locally $L^2$ near $x$ (with respect to a smooth volume form), and therefore it extends analytically across $D$. As a consequence, we get a holomorphic splitting
\[\Txdf_{|U\cap V} \simeq \F_{|U\cap V} \oplus \F_{|U\cap V}^{\perp}\]
As the complement of $U\cap V$ in $V$ has codimension at least $2$ and all the sheaves at play are reflexive, the above decomposition extends as a sheaf isomorphism
\[\txdf_{|V} \simeq \F_{|V} \oplus (j_{U\cap V})_*(\F_{|U\cap V}^{\perp})\]
where $j_{U\cap V}:U\cap V \to V$ is the open immersion. Pushing forward by $\pi$, we get on $(Y, \Delta)_{\rm reg}$:
\[\mathscr{T}_{Y}(-\Delta)_{|(Y, \Delta)_{\rm reg}}\simeq \G_{|(Y, \Delta)_{\rm reg}}\oplus \G'_{|(Y, \Delta)_{\rm reg}} \]
where $\G'$ is the (reflexive) sheaf on $Y$ defined as $(j_{(Y, \Delta)_{\rm reg}} \circ \pi \circ j_{U\cap V})_*(\F_{|U\cap V}^{\perp})$ where $j_{|\cdotp}:\, \cdotp \to Y$ generically denotes any open immersion from $\cdotp$ to $Y$. And here again, the reflexivity of these sheaves leads to the expected splitting: 
\[\mathscr{T}_{Y}(-\Delta)\simeq \G\oplus \G' \]
on $Y$, ending the proof of the polystability of $\mathscr{T}_{Y}(-\Delta)$.

\end{proof}

\begin{rema}
\label{remarque}
We already outlined in the introduction and in \S \ref{statement} that Theorem \ref{thm:pair} is a significant generalization of what was known before, i.e. the semistability of the tangent sheaf in the case of canonical singularities, cf \cite{Enoki}. However, two consequential simplifications occur if one is interested in a weakened form of Theorem \ref{thm:pair}:

$\cdotp$ First, if one only focuses on semistability and not polystability, then instead of using the inequality relating the Chern curvature form of a subbundle $F$ to the one of the ambient bundle $T_X$ (cf \eqref{sff}), one can use a weaker form of it as in \cite[equation (1.2)]{CP2} which enables to endow $ \det F$ with a smooth metric instead of the singular metric induced by $\om$ (the singularities occurring because $F$ is a subbundle only on an open set, and because $\om$ may have cuspidal singularities along a divisor), thus avoiding the use of the technical Lemma \ref{slope1}. 

$\cdotp$ Second, if $\Delta=0$, then the approximate Kähler-Einstein metrics $\om$ are smooth and don't have cuspidal singularities anymore. In that case, the crucial Lemma \ref{slope1} becomes much easier and was already treated in the item $(\ast\ast)$ of the proof of \cite[Theorem 8.3]{Koba}.
\end{rema}

\section{An extension to stable varieties}
\label{sec:stable}

\subsection{Stable varieties}
\label{sec:stable0}

Once we have studied log canonical pairs with ample log canonical bundle, it is very tempting to try to extend these results to varieties (or pairs) with possible non-normal singularities. More precisely, we will consider \textit{stable varieties} in the sense of Koll\'ar-Shepherd-Barron \cite{KSB} and Alexeev \cite{Alex}. These varieties are the higher dimensional analogue of stable curves as defined by Deligne-Mumford \cite{DM} and they arise in the compactification of the moduli space of smooth canonically polarized projective varieties; their precise definition stated below, though we refer to the very nice paper \cite{Kov} and the references therein for more details and insight on all the objects involved.

\begin{defi}
A stable variety is an equidimensional and reduced complex projective variety $X$ with semi-log canonical singularities such that $K_X$ is ample.
\end{defi}

Let us recall that $X$ has semi-log canonical singularities if the only singularities of $X$ in codimension $1$ are double points (i.e. locally analytically isomorphic to $(xy=0)\in \CC^{n+1}$), $X$ satisfies Serre's $S_2$ property, is $\Q$-Gorenstein, and the pair $(\Xnu, \Dnu)$ formed by the normalization of $X$ and its conductor divisor has log canonical singularities. Here the conductor of the normalization is essentially the Weil divisor determined by the preimage of the double points locus by the normalization map. 

It is important to notice that stable varieties may be reducible, which is a source of difficulties in view of a theory of semistability for coherent sheaves. Indeed, if we want to mimic the definition of the slope of a coherent sheaf $\F$, we need to define the $(n-1)$-cycle $c_1(\F)$. We can still do it on $\xreg$, but there is an issue when we want to extend it to the whole $X$ as the complement of $\xreg$ has codimension one in general. Actually, there is a very general notion of slope, defined in a high degree of generality using Hilbert polynomials (see e.g. \cite{HL}), and of course agreeing with the usual one on smooth varieties. We are now going to review these notions briefly.

\subsection{Slope and stability on non-normal varieties}

On non-normal varieties, defining the slope of a coherent sheaf is more complicated. We recall the definition, cf \cite{HL}. Let $X$ be a $n$-equidimensional reduced projective scheme over $\CC$, $L$ an ample line bundle, and $\F$ a coherent sheaf on $X$ of dimension $n$ (which means that the support of $\F$ has dimension $n$). We know that the Hilbert polynomial $P(\F)$ given by $m \mapsto \chi(X,\F \otimes L^{\otimes m})$ can be uniquely written in the form: 
\[P(\F,m):= \sum_{k=0}^n a_k(\F) \frac{m^k}{k!}\]
where $a_k(\F)$ are integers. Note that for $m$ large enough, the vanishing of the higher cohomology implies that $P(\F,m)= h^0(X,\F\otimes L^{\otimes m})$. Of course, this polynomial depends on $\F$ and on the polarization $L$.
If $X$ is integral, then the rank of $\F$ is defined as the rank of $\F$ at the generic point (as $\F$ is locally free on some dense open set). In this more general framework, we define the rank of $\F$ as a convex combination of the rank of $\F$ restricted to each irreducible components. More precisely, if $X=X_1 \cup \ldots \cup X_r$ are the irreducible components of $X$ and if $L_i$ (resp. $\F_i$) denotes the restriction of $L$ (resp. $\F$) to $X_i$, then we define
\[\rk(\F):= \frac{\sum_{i=1}^r (L_i^n) \,  \rk( \F_i )}{(L^n)}\]
One can also check that $\rk(\F)$ coincides with $a_n(\F)/a_n(\Ox)$, cf \cite[Remark 1.1.26]{Laz1}. If $\F$ has constant rank (ie $\rk(\F_i)$ is independent of $i$), then the rank of $\F$ is the same as $\rk(\F_i)$ for any $i$. 

Now we need to define the degree of $\F$ with respect to $L$. To do it, we introduce the suitable combination of the coefficients $a_k$ that gives in the smooth case the usual degree. More precisely:
\[\deg_L(\F):= a_{n-1}(\F)-\rk(\F)\, a_{n-1}(\Ox)\]
and finally, one can define the slope as the quotient of the degree by the rank:
\[\mu_L(\F):=\frac{\deg_L(\F)}{\rk(\F)}\]
Of course this definition only make sense for sheaves with positive rank.
\begin{rema}
\label{rem}
If $X=X_1\amalg \cdots \amalg X_s$ is the disjoint sum of its irreducible components, and if $\F$ has constant rank (ie $\rk(\F_{|X_i})$ does not depend on $i$), then it follows from the definition that the slope of $\F$ with respect to a polarization $L$ on $X$ is the sum of the slopes of $\F_{|X_i}$ with respect to $L_{|X_i}$
\end{rema}

The first thing to check is whether this definition generalizes the the definition of slope that we gave for normal varieties. Recall that if $\F$ is a coherent sheaf on a normal variety we defined $c_1(\F)$ to be the $(n-1)$-cycle represented by the closure of the Weil divisor attached to $\det (\F_{|\xreg})$.

\begin{prop}
Let $(X,L)$ be a polarized normal variety of dimension $n$, and $\F$ a torsion-free coherent sheaf on $X$. Then we have
\[\deg_L(\F)= (c_1(\F) \cdotp L^{n-1})\]
and in particular, $\mu_L(\F)$ agrees with $(c_1(\F) \cdotp L^{n-1})/\rk(\F)$.
\end{prop}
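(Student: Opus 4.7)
The plan is to pass to a resolution $\pi: \tilde X \to X$ of singularities and reduce the identity to the classical Hirzebruch-Riemann-Roch formula on the smooth variety $\tilde X$, then push everything down to $X$ via the projection formula. Choose $\pi$ to be an isomorphism over $X_{\mathrm{reg}}$, and set $\tilde L := \pi^* L$ and $\tilde \F := (\pi^*\F)/\mathrm{tors}$. Then $\tilde \F$ is a torsion-free coherent sheaf on $\tilde X$ with the same generic rank $r$ as $\F$, agreeing with $\F$ over $\pi^{-1}(X_{\mathrm{reg}}) \simeq X_{\mathrm{reg}}$.

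The first step is to compare the Hilbert polynomial of $\F$ on $X$ with that of $\tilde \F$ on $\tilde X$. Since $X$ is normal, $\pi_*\O_{\tilde X} = \O_X$; more generally the sheaves $R^i\pi_*\tilde \F$ for $i \geq 1$ are supported on $\pi(\mathrm{Exc}(\pi)) \subset X_{\mathrm{sing}}$, which has codimension at least two in $X$, and likewise $\pi_*\tilde \F$ agrees with $\F$ outside this codimension two locus. Using the Leray spectral sequence one deduces
\[ \chi(\tilde X, \tilde \F \otimes \tilde L^{\otimes m}) = \chi(X, \F \otimes L^{\otimes m}) + O(m^{n-2}), \]
together with the analogous identity for $\O_X$ and $\O_{\tilde X}$. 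In particular, the coefficients $a_{n-1}$ on both sides coincide, and similarly for $a_n$.

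Next I would apply Hirzebruch-Riemann-Roch on the smooth projective variety $\tilde X$ to a coherent sheaf $\mathcal G$ of generic rank $s$, extracting the $m^{n-1}$ coefficient:
\[ a_{n-1}(\tilde X, \mathcal G, \tilde L) = c_1(\mathcal G) \cdot \tilde L^{n-1} - \tfrac{s}{2}\, K_{\tilde X} \cdot \tilde L^{n-1}. \]
Subtracting $r$ times this identity applied to $\O_{\tilde X}$ from the identity applied to $\tilde \F$, the Todd-class terms cancel and one obtains
\[ a_{n-1}(\F) - r\, a_{n-1}(\O_X) = c_1(\tilde \F) \cdot \tilde L^{n-1}. \]
Finally, the projection formula gives $c_1(\tilde \F) \cdot \tilde L^{n-1} = \pi_* c_1(\tilde \F) \cdot L^{n-1}$. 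Since $\tilde \F$ and $\F$ agree on $X_{\mathrm{reg}}$, the non-exceptional components of the Weil divisor $c_1(\tilde \F)$ push forward to a representative of the Weil divisor class $c_1(\F)$, while the $\pi$-exceptional components map into a codimension two subset of $X$ and vanish under $\pi_*$. Thus $\pi_* c_1(\tilde \F) = c_1(\F)$ and the desired identity $\deg_L(\F) = c_1(\F) \cdot L^{n-1}$ follows; dividing by $\rk(\F) = r$ yields the statement about slopes.

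The main delicate point is the first step: it crucially uses normality of $X$ (so that $X_{\mathrm{sing}}$ has codimension at least two and $\pi_*\O_{\tilde X} = \O_X$) to guarantee that the discrepancies $R^i\pi_*\tilde \F$ and $\pi_*\tilde \F - \F$ affect only Hilbert polynomial coefficients of degree at most $n-2$, which is exactly what is needed to transfer the identity between the two $a_{n-1}$'s.
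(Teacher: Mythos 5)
Your proof is correct and follows essentially the same route as the paper's: resolve singularities by a $\pi$ that is an isomorphism over $X_{\mathrm{reg}}$, compare Hilbert polynomials up to $O(m^{n-2})$ via the Leray spectral sequence, the projection formula and the codimension-two support of the error sheaves, apply Hirzebruch--Riemann--Roch on the resolution, and push the first Chern class back down by the projection formula. The only cosmetic difference is that you work with $(\pi^*\F)/\mathrm{tors}$ rather than $\pi^*\F$ itself, which changes nothing since the discrepancy lives over the codimension-two singular locus.
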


\begin{proof}
Let $\pi:Y\to X$ a resolution of $X$ such that $\pi$ is an isomorphism over $\xreg$. We will show that these two quantities above coincide with $(c_1(\pi^*\F) \cdotp \pi^*L^{n-1})$. 

Using the projection formula and the fact that on $\xreg$, the cycles $\pi_* c_1(\pi^*(\F))$ and $c_1(\F)$ coincide, we see that $(c_1(\pi^*\F) \cdotp \pi^*L^{n-1})=(c_1(\F) \cdotp L^{n-1})$, so we did half of the job. The other equality requires a bit more work.

First, let us prove that $\deg_L(\F)= \deg_{\pi^*L}(\pi^*\F)$. If we define $\G$ to be the sheaf $\pi^* \F \otimes \pi^*L^{\otimes m}$, then the degeneracy of Leray spectral sequence provides the equality
%combined with the projection formula provide the equality
\begin{eqnarray}
\label{spectral}
\chi(Y, \G)= \sum_{q\geq 0}(-1)^q \chi(X,R^q \pi_*\G)
\end{eqnarray}
Moreover, the projection formula gives us a  isomorphism
\[  R^q \pi_*(\pi^*\F\otimes_{\O_Y} \pi^*L^{\otimes m}) \longrightarrow  R^q \pi_*(\pi^*\F)\otimes_{\Ox} L^{\otimes m} \]
As $\F$ is torsion-free, it is locally free in codimension one (cf e.g. \cite[5.15]{Koba}), so if we apply the projection formula once again, we obtain that the natural morphism \
\[R^q \pi_*(\pi^*\F)  \longrightarrow R^q \pi_*{\mathcal O_Y} \otimes_{\Ox} \F \]
is an isomorphism in codimension two. Combining these two observations, we get a morphism
\[  R^q \pi_*(\pi^*\F\otimes_{\O_Y} \pi^*L^{\otimes m}) \longrightarrow R^q \pi_*{\mathcal O_Y} \otimes_{\Ox} \F \otimes_{\Ox} L^{\otimes m}  \]
which is an isomorphism in codimension two. Therefore, as $\pi_*\O_Y = \O_X$, we get from \eqref{spectral}:

\[\chi(Y, \pi^* \F \otimes \pi^*L^{\otimes m})= \chi(X,\F\otimes L^{\otimes m})+\sum_{q> 0}(-1)^q \chi(X,R^q \pi_*\O_Y\otimes \F \otimes L^{\otimes m})+O(m^{n-2}) \]
Finally, as $X$ is normal and $\pi$ is an isomorphism over $\xreg$, the sheaves $R^q \pi_* \O_Y$ are supported in codimension $\leq 2$ for $q>0$. (cf \cite[8.2 \& 11.2]{Har} for example).
Therefore, we obtain:
\[\chi(Y, \pi^* \F \otimes \pi^*L^{\otimes m})= \chi(X,\F\otimes L^{\otimes m})+O(m^{n-2}) \]
These two polynomials have same leading term coefficient, namely the top intersection of $c_1(\F)\otimes L$ divided by $n!$ (as $\pi$ is generically of degree one), so they have the same coefficient in front of the second highest power of $m$.
As this is true for any torsion-free sheaf $\F$, we have the same result for $\O_X$ too, so in the end we get $\deg_L(\F)= \deg_{\pi^*L}(\pi^*\F)$. 

To conclude the proof of the proposition, we need to see that $\deg_{\pi^*L}(\pi^*\F)=(c_1(\pi^*\F) \cdotp \pi^*L^{n-1})$. By Riemann-Roch-Hirzebruch theorem, we know that
\begin{eqnarray*}
\chi(Y, \pi^* \F \otimes \pi^*L^{\otimes m}) &=&
 \rk(\F) \cdotp (\pi^*L)^n \frac{m^n}{n!}+\Big[(c_1(\pi^*\F) \cdotp \pi^*L^{n-1})+\\
&& +\frac{1}{2}(c_1(Y)\cdotp \pi^*L^{n-1})\Big] \frac{m^{n-1}}{(n-1)!}+O(m^{n-2})
 \end{eqnarray*}
Applying this to $\F$ and $\O_Y$, we get the expected equality.
\end{proof}

Now we would like to see, for non-normal variety, how to relate the slope of a sheaf with the slope of its pull-back by the normalization map. This is the content of the following proposition:

\begin{prop}
\label{prop:norm}
Let $X$ be a reduced equidimensional projective scheme over $\CC$, $L$ a polarization, and $\F$ a coherent sheaf which is locally free in codimension one. If $\pi:Y\to X$ is the normalization of $X$, then we have:
\[\mu_{\pi^*L}(\pi^*\F) = \mu_L(\F)\]
\end{prop}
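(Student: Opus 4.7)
The plan is to compare the Hilbert polynomials on $Y$ and $X$ via the adjunction map $\varphi \colon \F \to \pi_*\pi^*\F$, exploiting the fact that the normalization $\pi$ is a finite morphism.

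First, since $\pi$ is finite we have $R^q\pi_* = 0$ for $q > 0$, and combined with the projection formula this gives, for every $m$,
\[
\chi(Y, \pi^*\F \otimes \pi^*L^{\otimes m}) = \chi(X, \pi_*\pi^*\F \otimes L^{\otimes m}), \qquad \chi(Y, \pi^*L^{\otimes m}) = \chi(X, \pi_*\O_Y \otimes L^{\otimes m}).
\]
Hence $a_k(\pi^*\F) = a_k(\pi_*\pi^*\F)$ and $a_k(\O_Y) = a_k(\pi_*\O_Y)$ for every $k$, and the problem is reduced to comparing $\pi_*\pi^*\F$ with $\F$, and $\pi_*\O_Y$ with $\Ox$, viewing all of them as sheaves on $X$.

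Second, I would analyze the kernel $K$ and cokernel $C$ of $\varphi$, both of which are supported on the non-normal locus $Z \subset X$. The crucial step is to use the assumption that $\F$ is locally free in codimension one: at the generic point $\eta$ of each codimension-one component of $Z$, $\F_\eta$ is free of rank $r := \rk \F$ over $\O_{X,\eta}$, so $\varphi$ locally identifies with $r$ copies of the inclusion $\Ox \hookrightarrow \pi_*\O_Y$. Consequently $K_\eta = 0$ and $C_\eta \cong (\pi_*\O_Y/\Ox)_\eta^{\oplus r}$. Since $a_{n-1}$ of a torsion sheaf depends only on its generic lengths along the $(n-1)$-dimensional components of its support, this yields
\[
a_{n-1}(K) = 0, \qquad a_{n-1}(C) = r \cdot a_{n-1}(\pi_*\O_Y/\Ox),
\]
while $a_n(K) = a_n(C) = a_n(\pi_*\O_Y/\Ox) = 0$ since each of these sheaves has support of dimension $\leq n-1$.

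Finally, applying additivity of $a_n$ and $a_{n-1}$ to the exact sequences $0 \to K \to \F \to \pi_*\pi^*\F \to C \to 0$ and $0 \to \Ox \to \pi_*\O_Y \to \pi_*\O_Y/\Ox \to 0$ gives $a_n(\pi_*\pi^*\F) = a_n(\F)$ and $a_n(\pi_*\O_Y) = a_n(\Ox)$, whence $\rk(\pi^*\F) = \rk(\F) = r$; for the $(n-1)$-coefficients, the ``extra'' contribution $r\cdot a_{n-1}(\pi_*\O_Y/\Ox)$ appears on both sides and cancels, producing
\[
a_{n-1}(\pi^*\F) - r\cdot a_{n-1}(\O_Y) = a_{n-1}(\F) - r\cdot a_{n-1}(\Ox),
\]
i.e. $\deg_{\pi^*L}(\pi^*\F) = \deg_L(\F)$, and dividing by $r$ yields the claimed equality of slopes. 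The main obstacle is the local analysis in the second step: it is precisely there that the hypothesis of local freeness in codimension one is indispensable, since otherwise the generic lengths of $K$ and $C$ along components of $Z$ need not assemble into a multiple of those of $\pi_*\O_Y/\Ox$, and the cancellation responsible for the equality of degrees would break down.
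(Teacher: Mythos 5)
Your proof is correct and follows essentially the same route as the paper: both arguments reduce the comparison to the sheaf $\T=\pi_*\O_Y/\Ox$, use local freeness of $\F$ in codimension one to identify the correction term to $a_{n-1}(\pi^*\F)$ as $r\cdot a_{n-1}(\T)$, and observe that it cancels against the $r$ times the correction term $a_{n-1}(\T)$ coming from $a_{n-1}(\O_Y)-a_{n-1}(\Ox)$. Your bookkeeping via the globally exact four-term sequence $0\to K\to \F\to\pi_*\pi^*\F\to C\to 0$, with $a_{n-1}(K)=0$ and $a_{n-1}(C)=r\,a_{n-1}(\T)$ computed from generic lengths, is a slightly cleaner packaging of the paper's sequence that is only ``exact in codimension one'', but the mathematical content is identical.
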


\begin{proof}
First, let us define $\T:=\pi_*\O_Y/\Ox$, and let set $r=\rk(\F)$ (one may notice that $\F$ being locally free in codimension one, it has constant rank). As $L$ and $\pi^*L$ are ample, we will assume in the following that the power $m$ to which we raise them is large enough so that all the higher cohomology groups involved are zero. As a consequence, we will identify the Euler-Poincaré characteristic with $h^0$.
Using the projection formula combined with a similar argument as in the previous proof  (we use here that $\F$ is locally free in codimension one) we see that 
\[h^0(Y, \pi^*\F \otimes \pi^*\Lm )=h^0(X,\F \otimes \Lm \otimes \pi_*\O_Y)+O(m^{n-2})\]
Moreover, as $\F$ is locally free in codimension one, the short sequence
\[0\to \F \otimes \Lm \to \F \otimes \Lm \otimes \pi_*\O_Y \to \F \otimes \Lm \otimes \T \to 0\]
is exact in codimension one. Combining these two identities, we get: 
\[a_{n-1}(\pi^*\F)= a_{n-1}(\F)+a_{n-1}(\F\otimes \T)\]
where the polarizations which respect to which the Hilbert polynomials are computed are respectively $L$ on $X$ and $\pi^*L$ on $Y$.

Therefore, we have:
\begin{eqnarray*}
\mu_{\pi^*L}(\pi^*\F) & = & \frac{1}{r} a_{n-1}(\pi^*\F)-a_{n-1}(\O_Y) \\
&=& \frac{1}{r} a_{n-1}(\F)+\frac{1}{r}a_{n-1}(\F \otimes \T)-a_{n-1}(\O_X)+(a_{n-1}(\O_Y)-a_{n-1}(\O_X))\\
&=&\mu_L(\F)+\frac{1}{r}a_{n-1}(\F\otimes \T)-a_{n-1}(\T)
\end{eqnarray*}
If $\T$ is supported in codimension $\leq 2$, then $\frac{1}{r}a_{n-1}(\F\otimes \T)=a_{n-1}(\T)=0$ and we are done. Else, it is supported in a codimension one subvariety $Z$, so it follows from Riemann-Roch formula (in the reducible case, cf \cite[Proposition VI.2.7]{Kollar96}) that $a_{n-1}(\F \otimes \T)= \rk((\F\otimes \T)_{|Z})=r \cdotp \rk(\T) = ra_{n-1}(\T)$, where we used again that $\F$ is locally free in codimension one. In the end, we have proved that the slopes of $\F$ and $\pi^*\F$ agree.
\end{proof}

\begin{exem}
One should pay attention to the fact the the above result is false without the assumption that the sheaf is locally free in codimension one. Indeed, if we choose $X=(xy=0) \subset \P^2$ to be the union of two lines, then its normalization is $\P^1$ with two points $p',p''$ sitting above the node $p$. Then, if we choose $L=\O_{\P^2}(1)_{|X}$ and $\F= \Ox \oplus (\Ox/\mathcal I_p)$ where $I$ is the ideal sheaf of the node, we see that $h^0(\F\otimes \Lm)=h^0(\Lm)+1$, ie $\mu_L(\F)=1$. But $\pi^*\F=\O_Y\oplus \O_Y/\mathcal I_{p',p''}$ so that $h^0(\pi^*\F\otimes \pi^*\Lm)=h^0(\pi^*\Lm)+2$ and therefore $\mu_{\pi^*L}(\pi^*\F)=2$.
\end{exem}

In view of all the observations we made so far, one can introduce a notion of semi-stability which is weaker than the one in \cite[1.2.12]{HL}, but particularly adapted to our context:

\begin{defi}
Let $X$ be a projective $n$-equidimensional reduced projective scheme over $\CC$, and $L$ be an ample divisor. A coherent sheaf $\E$ is said semistable with respect to $L$ if for every subsheaf $\F\subset \E$ being locally free in codimension one and satisfying $0<\rk(\F)<\rk(\E)$, we have $\mu_L(\F) \le \mu_L(\E)$.
\end{defi}

If $X$ is normal and $\E$ is torsion-free, then any subsheaf $\F$ of $\E$ is also torsion-free hence locally free in codimension $1$ as $X$ is normal. So we just recover the usual notion of semistability. 

\subsection{The result}
The general definition above is actually motivated by Proposition \ref{prop:norm} and tailored for the following Theorem:

\begin{theo}
\label{thm:stable}
Let $X$ be a stable variety, let $\nu:X^{\nu}\to X$ be the normalization of $X$ and $\Delta$ be the conductor of $\nu$. Then the sheaf $\nu_*\mathscr T_{X^{\nu}}(-\Delta)$ is semistable with respect to $K_X$.
\end{theo}

\begin{proof}
Let $\F\subset \Tx$ be a coherent subsheaf of $\Tx$ which is locally free in codimension one. If $\nu:X^{\nu}\to X$ is the normalization of $X$ and $\Delta$ be the conductor divisor of $\nu$ on $X^{\nu}$, then we get a generically injective map $\nu^*\F \to \nu^*\nu_*\mathscr T_{\Xnu}(-\Delta)$. If we compose if with the natural map $\nu^*\Tx \to (\nu^*\nu_*\mathscr T_{\Xnu}(-\Delta))^{**}$, then in view of the lemma \ref{lem:norm} below, we get a generically injective morphism
\begin{equation*}
\label{eq:nor}
\nu^*\F \to \mathscr T_{X^{\nu}}(-\Delta)
\end{equation*}
Let us now consider $\pi:Y\to X^{\nu}$ a log resolution of $(X^{\nu}, \Delta)$ that leaves the snc locus untouched, and let $D=\pi^{-1}(\Delta)$.
Then there is a natural morphism $\pi^*T_{X^{\nu}}(-\Delta) \to \mathscr T_Y(-D)$ which is an isomorphism over $(X^{\nu}, \Delta)_{\rm reg}$. Taking to the double dual and using the reflexivity of $\mathscr T_Y(-D)$, we end up with a natural isomorphism $(\pi^*T_{X^{\nu}}(-\Delta))^{**} \simeq \mathscr T_Y(-D)$. Pulling back \eqref{eq:nor} by $\pi$, we get a generically injective morphism $\pi^* \nu^* \F \to \mathscr T_Y(-D)$. We proved that in this situation, the slope of $\pi^*\nu^* \F$ with respect to $\pi^*\nu^*K_X$ is less or equal to the slope of $\mathscr T_Y(-D)$. At this point, we use the fact that $\F$ has constant rank to pass from the slope inequality on each component to the global slope inequality.

 By the projection formula, it yields
\[\mu_{\nu^*K_X}(\nu^*\F) \le \mu_{\nu^*K_X}(T_{X^{\nu}}(-\Delta))\]
We would like to push forward to $X$ this inequality. For the left hand side, this can be done thanks to Proposition \ref{prop:norm} which shows that $\mu_{\nu^*K_X}(\nu^*\F) = \mu_{K_X}(\F)$ as $\F$ is locally free in codimension one. Actually, the right hand side is not equal to $\mu_{K_X}(\Tx)$, but there is a correction factor which has the right sign fortunately. 
More precisely, writing $\T:=\nu_*\O_{X^{\nu}}/\Ox$, we have by the projection formula: 
\[h^0(\Xnu, \nu^{*}K_X^{\otimes m})= h^0(X,\Km \otimes \nu_*\O_{\Xnu})\]
and also
\[h^0(\Xnu, \mathscr T_{\Xnu}(-\Delta)\otimes \nu^{*}K_X^{\otimes m})= h^0(X,\nu_*\mathscr T_{\Xnu}(-\Delta) \otimes \Km )\]
Therefore, we get
\begin{eqnarray*}
\mu_{\nu^*K_X}(T_{X^{\nu}}(-\Delta)) & = & \frac{1}{n}a_{n-1}(T_{X^{\nu}}(-\Delta))-a_{n-1}(\O_{\Xnu})\\
&=& \left(\frac{1}{n}a_{n-1}(\Tx)-a_{n-1}(\Ox) \right) + a_{n-1}(\Ox) -a_{n-1}(\O_{\Xnu}) \\
&=& \mu_{K_X}(\nu_*T_{\Xnu}(-\Delta)) - a_{n-1}(\T)
\end{eqnarray*}
As $\T$ is supported in codimension $\ge 1$, we have $a_{n-1}(\T) \ge 0$, so that in the end, we have proved
\[ \mu_{K_X}(\F) \le \mu_{K_X}(\nu_*\mathscr T_{\Xnu}(-\Delta)) \]
\end{proof}

\begin{rema}
\label{rm:ct}
In general, the sheaf $\nu_*\mathscr T_{X^{\nu}}(-\Delta)$ is distinct from the tangent sheaf of $X$ (defined as the sheaf of derivations of $\Ox$) as already in the case of a (local) node $X= \mathrm {Spec}(\mathbb C[x,y,z]/(xy))$ the former sheaf strictly contains the latter one. But given the last step of the proof where we neglected $a_{n-1}(T)$, it could be possible that the same proof yields the semistability of $\mathscr T_X$.
\end{rema}

To conclude this section, let us state and prove a result we have used in the course of the proof of Theorem \ref{thm:stable}:

\begin{lemm}
\label{lem:norm}
Let $X$ be a projective $n$-equidimensional reduced projective scheme over $\CC$ and let $\pi:Y\to X$ the normalization of $X$. If $\E$ is a reflexive coherent sheaf on $Y$, then the reflexive hull $(\pi^* \pi_* \E)^{**}$ is naturally isomorphic to $\E$.
\end{lemm}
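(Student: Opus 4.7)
The plan is to work with the adjunction counit $\varphi\colon \pi^*\pi_*\mathscr{E}\to\mathscr{E}$ and to show that its bidual $\varphi^{**}\colon (\pi^*\pi_*\mathscr{E})^{**}\to \mathscr{E}^{**}\simeq \mathscr{E}$ is an isomorphism. Since $(\pi^*\pi_*\mathscr{E})^{**}$ and $\mathscr{E}$ are both reflexive sheaves on the normal scheme $Y$, it suffices to check that $\varphi^{**}$ is an isomorphism on an open subset $U\subset Y$ whose complement has codimension $\ge 2$: on a normal scheme, any morphism of reflexive sheaves which is an isomorphism over such a big open set extends uniquely to a global isomorphism (take the pushforward of the inverse along the inclusion $U\hookrightarrow Y$).

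This verification is carried out at all points of codimension at most one in $Y$. At a generic point $\eta$ of $Y$, the finite birational map $\pi$ satisfies $\pi^{-1}(\pi(\eta))=\{\eta\}$ with trivial residue field extension, so $(\pi^*\pi_*\mathscr{E})_\eta \simeq \mathscr{E}_\eta$ canonically and $\varphi_\eta$ is the identity. At a codimension-one point $y$, $\mathcal{O}_{Y,y}$ is a DVR since $Y$ is normal. Because $\pi$ is finite, hence affine, the counit is surjective on stalks: any $m\in\mathscr{E}_y$ is hit by $(m,0,\ldots,0)\otimes 1 \in (\pi_*\mathscr{E})_{\pi(y)}\otimes_{\mathcal{O}_{X,\pi(y)}}\mathcal{O}_{Y,y}$. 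Since $\mathscr{E}$ is reflexive, $\mathscr{E}_y$ is torsion-free, hence free over $\mathcal{O}_{Y,y}$, so the torsion submodule $T$ of $(\pi^*\pi_*\mathscr{E})_y$ is automatically killed by $\varphi_y$. The resulting surjection $(\pi^*\pi_*\mathscr{E})_y/T \twoheadrightarrow \mathscr{E}_y$ is between finitely generated free $\mathcal{O}_{Y,y}$-modules of equal rank (the common rank at $\eta$, where $\varphi$ is already an isomorphism), hence is itself an isomorphism by Nakayama. Because for a finitely generated module over a DVR the double dual coincides with the torsion-free quotient, this is precisely the statement that $\varphi^{**}_y$ is an isomorphism.

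Combining the above, the closed locus $Z\subset Y$ where $\varphi^{**}$ fails to be an isomorphism avoids every point of codimension $\le 1$, hence has codimension at least two, and the reflexivity principle recalled in the first paragraph concludes the proof. The genuinely delicate step is the codimension-one analysis: at a codimension-one point $y\in Y$ lying over the non-normal locus of $X$, the counit $\varphi_y$ typically fails to be injective, because $(\pi_*\mathscr{E})_{\pi(y)}$ collects the contributions of every sheet of $\pi$ over $\pi(y)$ and pulling back produces an extra torsion summand in $(\pi^*\pi_*\mathscr{E})_y$ (as one sees already in the node example with $\mathscr{E}=\mathcal{O}_Y$). The key observation is that this parasitic summand is precisely what biduality annihilates over the DVR $\mathcal{O}_{Y,y}$, so that $\varphi^{**}_y$ becomes an isomorphism even though $\varphi_y$ itself is not.
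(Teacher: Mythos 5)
Your proof is correct, and after the common first step it takes a genuinely different route from the paper's. Both arguments begin identically: since $(\pi^*\pi_*\mathscr{E})^{**}$ and $\mathscr{E}$ are reflexive on the normal scheme $Y$, it suffices to check the natural map over an open set whose complement has codimension at least two. From there the paper reduces to $\mathscr{E}$ locally free, hence locally to $\mathscr{E}=\widetilde{B}$ with $Y=\mathrm{Spec}(B)$, $X=\mathrm{Spec}(A)$, and proves by a direct computation that already the \emph{single} dual map $\mathrm{Hom}_B(B,B)\to\mathrm{Hom}_B(B\otimes_A B,B)$ is an isomorphism; the key input is the containment $A\subseteq B\subseteq Q(A)$ in the total quotient ring, which lets one clear denominators by a non-zero-divisor $s$ and show that any $\psi$ is determined by $\psi(1\otimes 1)$. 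You instead analyse the counit stalkwise in codimension $\le 1$: birationality of $\pi$ on each component gives an isomorphism at generic points, affineness gives surjectivity of the counit, and over the DVRs at codimension-one points biduality annihilates exactly the torsion, so the rank count at the generic point plus Nakayama finishes. Your route is somewhat longer but more structural: it isolates precisely which features of the normalization are used (finite, birational on components, normal source) and correctly identifies the torsion created by pulling back $\pi_*\mathscr{E}$ as the only obstruction, which the bidual removes; the paper's argument is a shorter closed-form identity but leans on the explicit embedding of $B$ into $Q(A)$. One cosmetic caveat: writing $(m,0,\ldots,0)$ for an element of $(\pi_*\mathscr{E})_{\pi(y)}$ suggests a product decomposition of the Zariski stalk over the points of the fibre, which does not exist in general (the preimage of a connected Zariski neighbourhood of a node is connected); what you actually use, and what is true, is simply that $m\otimes 1$ maps to $m$, i.e.\ surjectivity of the counit of an affine morphism, so this does not affect the validity of the argument.
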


\begin{proof}
Let us begin by reducing the problem to one of commutative algebra. First, there is a natural map $\pi^*\pi_*\E \to \E$ inducing $(\pi^*\pi_*\E)^{**} \to \E^{**}\simeq \E$. This map is a morphism between two reflexive sheaves, so if we can show that it is a isomorphism over a open set in $Y$ whose complement has codimension at least two, then our map will actually be a sheaf isomorphism over the whole $Y$. As a consequence, one can assume that $\E$ is locally free (as $\E$ is reflexive, it will be locally free on such a "big" open set). Now, the problem is local, and as the normalization is an affine morphism, one can assume without loss of generality that $Y=\mathrm{Spec}(B)$, $X=\mathrm{Spec}(A)$, and that $\E=\widetilde B$. We claim that the sheaf morphism $\E^* \to (\pi^*\pi_*\E) ^*$ is an isomorphism, or equivalently that
the following morphism of $B$-modules:
\begin{align*}
  F \colon \Hom(B,B) &\longrightarrow \Hom(B\otimes_A B_A, B)\\
  \vp &\longmapsto \vp \circ f
\end{align*}
is an isomorphism. Here, $B_A$ is $B$ viewed as an $A$-module and $f:B\otimes_A B_A \to B$ is defined by $f(\sum b_i \otimes m_i)=\sum b_im_i$. As $f$ is surjective, $F$ is injective, so we need to prove that $F$ is surjective. Given $\psi \in \Hom(B\otimes B_A, B)$, we define $\vp \in \Hom(B,B)$ by setting $\vp(1)= \psi(1\otimes 1)$. We want to show that $F(\vp)=\psi$, and this amounts to proving that for any $b\in B$, we have $\psi(1\otimes b)=b\psi(1\otimes 1)$. The crucial input is that $B$ is a subring of the total quotient ring of $A$, so there exists $s\in A$ which is not zero divisor such that $sb\in A$. Therefore, if we set $x:=\psi(1\otimes b)-b\psi(1\otimes 1)$, we get $sx= s\psi(1\otimes b)-sb\psi(1\otimes 1)= \psi(s\otimes b) - \psi(sb\otimes 1)=\psi(1\otimes sb)-\psi (sb\otimes 1)=0$
as $sb\in A$. So $sx=0$; but $s$ is not a zero divisor in $A$ and hence in $B$ too as $B$ is a subring of the total quotient ring of $A$. This proves that $x=0$, and thus $F$ is an isomorphism as claimed.
\end{proof}

\section{More stability and generic semipositivity}
\label{sec:gsp}

In this last section, we try to loosen the positivity assumptions on $K_X$ (replacing ample by nef, or even anti-nef). In view of Miyaoka's semipositivity theorem (cf discussion after Theorem \ref{thm:gsp}), it is natural to expect that the cotangent sheaf (resp. tangent sheaf) is generically semipositive with respect to any Kähler class. This is what we are going to show by adapting the methods of the previous sections to this different context. 

But before that, let us recall that on a complex normal variety $X$, the reflexive sheaf of differentials, that we will denote by $\Omega_X^1$ is defined to be the pushforward $j_{*}\Omega_{\xreg}^1$ by the open immersion $j: \xreg \hookrightarrow X$ of the regular locus into $X$. Equivalently, $\Omega_X^1$ can be defined as the reflexive envelope of the push-forward of the bundle of differentials $(\pi_* \Omega_{X'})^{**}$ for any resolution $\pi:X'\to X$. The sheaf $\pi_* \Omega_{X'}$ is already reflexive in the case of log terminal singularities, but it may not the case anymore if one merely assume that the singularities are log canonical (cf. \cite[Theorem 1.4 \& 1.5]{GKKP}). 

The main theorem of this section is the following:

\begin{theo}
\phantomsection
\label{thm:gsp}
Let $X$ be a $n$-dimensional compact Kähler space with log canonical singularities, and $\om$ a Kähler form. If $K_X$ is nef (resp. $-K_X$ is nef), then $\Omega_X^1$ (resp $\Tx$) is generically $\om$-semipositive. 
\end{theo}

Applying these results to singular Calabi-Yau varieties, we can even show:

\begin{coro}
\label{cy}
Let $X$ be a singular Calabi-Yau variety, i.e. $X$ is a complex projective variety with at most klt singularities and such that $K_X$ is numerically trivial. Then $\mathscr T_X$ is polystable with respect to any polarization.
\end{coro}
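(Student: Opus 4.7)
The plan is to combine Theorem \ref{thm:gsp} with the polystability strategy from Step 4 of the proof of Theorem \ref{thm:pair}, in the setting where the approximate K\"ahler--Einstein metrics are Ricci-flat rather than of negative curvature. Fix a K\"ahler class $\om$ on $X$; since $K_X$ is numerically trivial, both $K_X$ and $-K_X$ are nef, so Theorem \ref{thm:gsp} yields that $\Omega_X^1$ and $\Tx$ are both generically $\om$-semipositive, and $\mu_\om(\Tx) = -(K_X\cdot \om^{n-1})/n = 0$. For any saturated coherent subsheaf $\F \subset \Tx$ with torsion-free quotient $\mathscr{Q}$, generic semipositivity of $\Tx$ gives $\mu_\om(\mathscr{Q}) \ge 0$, so the additivity identity $\rk(\F)\,\mu_\om(\F) + \rk(\mathscr{Q})\,\mu_\om(\mathscr{Q}) = 0$ forces $\mu_\om(\F) \le 0 = \mu_\om(\Tx)$; hence $\Tx$ is $\om$-semistable, and semistability with respect to any polarization follows.

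For polystability, as in Step 4 of the proof of Theorem \ref{thm:pair}, by induction on the rank it suffices to show that every saturated subsheaf $\F \subsetneq \Tx$ realizing the equality $\mu_\om(\F) = \mu_\om(\Tx) = 0$ is a direct summand of $\Tx$. Pick a resolution $\pi : X' \to X$ and, as in Step 1 of the proof of Theorem \ref{thm:pair}, pull back and saturate $\F$ inside $\mathscr T_{X'}$ (no boundary divisor appears since $X$ is klt and $D=0$); fix an ample class $A$ on $X'$. Writing $K_{X'} = \pi^*K_X + \sum_i a_iE_i$ with $a_i > -1$, I would construct approximate Ricci-flat K\"ahler metrics $\om_{t,\ep}$ on $X'$, in the class $\pi^*\om + tA$ and satisfying
\[
\Ric \om_{t,\ep} = t\om_A - \sum_i a_i\,\theta_{i,\ep},
\]
where $\theta_{i,\ep}$ regularizes $[E_i]$. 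The main theorem of \cite{BG} provides $\mathscr C^\infty_{\rm loc}$ convergence of $\om_{t,\ep}$ on compacts of $\pi^{-1}(\xreg)$ towards a singular Ricci-flat metric $\omi$, and Proposition \ref{prop:est} supplies the uniform Laplacian control on the cohomological error terms. Following Step 3 of the proof of Theorem \ref{thm:pair}, the slope of $\F$ with respect to $\pi^*\om + tA$ is expressed as the slope of $\mathscr T_{X'}$ with respect to the same class, minus $\frac{1}{n}\int_{U_0}\mathrm{tr}(\beta_{t,\ep}^*\wedge \beta_{t,\ep})\wedge \om_{t,\ep}^{n-1}$, plus terms that tend to $0$ as $\ep \to 0$ and then $t \to 0$, where $\beta_{t,\ep}$ is the second fundamental form with respect to $\om_{t,\ep}$ of the subbundle part of $\F \subset T_{X'}$ over the open set $U_0 \subset \pi^{-1}(\xreg)$. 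The equality of slopes forces this non-negative integral to tend to $0$, and Fatou's lemma combined with the smooth convergence of $\om_{t,\ep}$ on compacts of $U_0$ gives that the second fundamental form of $\F \subset T_{X'}$ relative to $\omi$ vanishes identically on $U_0$. As at the end of Step 4 of the proof of Theorem \ref{thm:pair}, this produces a holomorphic orthogonal splitting of $T_{X'}$ over $U_0$, which extends by reflexivity across the codimension-$\ge 2$ complement inside $\pi^{-1}(\xreg)$, descends to $\xreg$ and extends to $X$ by reflexivity, yielding the desired splitting $\Tx \simeq \F \oplus \F'$.

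The main obstacle is the analytic input: constructing the metrics $\om_{t,\ep}$ with the uniform estimates needed to control the cohomological error terms and establishing their $\mathscr C^\infty_{\rm loc}$ convergence to $\omi$. Compared with the proof of Theorem \ref{thm:pair}, however, the absence of a boundary divisor eliminates the cuspidal-metric complications (Lemma \ref{lem2} and Proposition \ref{slope1}), as observed in Remark \ref{remarque}, so the work essentially reduces to assembling the inputs from \cite{BG} and Proposition \ref{prop:est} within the Ricci-flat variant of the argument of Section \ref{sec:poly}.
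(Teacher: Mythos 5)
Your proposal follows essentially the same route as the paper's proof: semistability is read off from Theorem \ref{thm:gsp} together with $\mu_\om(\Tx)=0$, and polystability is obtained by running the Step~4 argument of Theorem \ref{thm:pair} with approximate Ricci-flat metrics in $c_1(\pi^*\om+tA)$ on a resolution, using Fatou's lemma and the local smooth convergence to a singular Ricci-flat metric to kill the second fundamental form, then extending the splitting by reflexivity (with the cuspidal complications indeed absent). Two cosmetic corrections: the Ricci ansatz should be $\Ric \om_{t,\ep}=-\sum_i a_i\theta_{i,\ep}$ with no $t\om_A$ term (which would be cohomologically inconsistent, since $c_1(X')=-\sum_i a_i[E_i]$ while the class $tA-\sum_i a_i[E_i]$ is not $c_1(X')$; the parameter $t$ enters only through the Kähler class), and the $\mathscr C^{\infty}_{\rm loc}$ convergence away from the exceptional divisor is the input from \cite{EGZ} rather than \cite{BG}, which treats the negatively curved log canonical case.
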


Recall also that a reflexive sheaf $\E$ on a normal compact Kähler space $X$ endowed with a Kähler form $\om$ of is said to be generically $\om$-semipositive if for all coherent quotient $\F$ of $\E$, the degree of $\F$ with respect to $\om$ is non-negative, ie 
$\int_X c_1(\F) \wedge \om^{n-1} \ge 0$. 
By a theorem of Hartshorne \cite{Har3}, a vector bundle over a curve is nef if and only if all its quotient bundles have non-negative degree. So as a consequence, over a projective variety the restriction of a generically semipositive sheaf/bundle to a sufficiently general curve is nef. \\

The case $K_X$ nef of Theorem \ref{thm:gsp} is a weak form of Miyaoka semiposivity theorem \cite{Miyaoka}, as this celebrated theorem holds for every normal projective variety, any polarization $(H_1, \ldots, H_{n-1})$ and only with the assumption that $X$ is not uniruled, which is automatic if $-K_X$ is nef and $X$ is smooth (or merely has canonical singularities). However, our result covers some new cases for the following two reasons:

$\cdotp$ It holds for (singular) Kähler spaces and not only for algebraic varieties \--- it was also proved in the smooth Kähler case by Junyan Cao \cite{Cao}.

$\cdotp$ As there exist rational (hence uniruled) surfaces with only klt singularities and for which $K_X$ is ample \cite[Example 43]{Ko08}, our assumption that $K_X$ is nef encompasses some new cases.
 
This result is closely related with two recent results. First, F. Campana and M. P\u{a}un \cite{CP, CP2} showed that given a log smooth pair $(X,D)$ such that $K_X+D$ is pseudo-effective, then one can dominate the slope of any subsheaf of $\otimes^m \Omega_X(\log D)$ by $m$ times the intersection $c_1(K_X+D) \cdotp \{\om\}^{n-1}$ if $\om$ is the Kähler form with respect to which the slope is computed. Also, D.Greb, S.Kebekus and T. Peternell  obtain in \cite[Theorem 1.3]{GKP} a structure theorem for the tangent sheaf of klt varieties with trivial canonical bundle (and canonical singularities): it decomposes into a sum of stable sheaves after a finite cover, étale in codimension one. Let us finally mention that it is conjectured \cite[Conj 1.3]{Peternell} that a projective manifold with nef anticanonical bundle has a generically nef tangent bundle (ie semipositive with respect to every polarization $(H_1, \ldots, H_{n-1})$).

\begin{proof}[Proof of Theorem \ref{thm:gsp}]
Let us begin with the case $K_X$ nef. Here again, as in the previous proofs, we denote by $Y$ the singular original Kähler space, and choose $\pi : X\to Y$ a resolution. We will show that for each coherent subsheaf $\F \subset \mathscr T_X$ and every Kähler form $\om_Y$ on $Y$, we have $\int_X c_1(\F) \wedge (\pi^*\om_Y)^{n-1} \le 0$. This will as before show that each coherent subsheaf of $\mathscr T_Y$ has non-positive slope with respect to $\om$, and this will conclude by duality (consider a quotient $\mathcal G$ of $\Omega_Y^1$, it induces a subsheaf $\mathcal G^* \subset \mathscr T_Y$ with non-positive slope, so that $\mathcal G$ has a non-negative slope). \\

We write $K_X= \pi^*K_Y+D$ where $D=\sum_{i\in I} a_i D_i$ is a $\pi$-exceptional $\Q$-divisor with snc support and coefficients $a_i\ge -1$. We choose sections $s_i$ cutting out the components $D_i$, and fix smooth hermitian metrics $|\cdotp|_i$ whose Chern curvature form will be denoted by $\Theta_i$.

Also, we know that $\pi^*K_Y$ is nef, so for every $t>0$, there exists a smooth form $\om_t\in c_1(\pi^*K_Y)$ such that $\om_t \ge - t \om_A$, where $\om_A$ is a fixed Kähler form on $X$; we will write $A:= \{\om_A\}$ for its cohomology class in $H^{1,1}(X,\R)$. 
Let $\om_Y$ a Kähler form on $Y$, and set $H:=\{\om_Y\}$ its cohomology class; we introduce the pull-back of $\om_Y$ by the resolution: $\om_X= \pi^*\om_Y$; it is a semipositive big form and for each $t>0$, $\om_X+t\om_A$ is a Kähler class.
We introduce a regularizing parameter $\ep>0$ as before, but here we will need a new parameter $\delta>0$ to deal with the log canonical singularities without introducing cuspidal metrics (it turns out that this is possible since $D$ is exceptional, but we couldn't have used this trick to show the semistability of the logarithmic tangent sheaf earlier). 

Thanks to Yau's theorem, we can find a smooth solution $\vp$ of the following Monge-Ampère equation:  
\begin{equation}
\label{eq:ke2n}
(\om_X+ t\om_A+\ddc \vp)^n = \prod_{i \in I} (|s_i|^2+\ep^2)^{(1-\delta)a_i}dV_t
\end{equation}
where $dV_t$ is a volume form whose Ricci curvature is $\Ric dV_t = -\om_t-\sum a_i \Theta_i$. Setting as before $\om:=\om_X+t\om_A+\ddc \vp$ (so $\om$ depends on $\ep, t$ and $\delta$), we find:
\begin{equation}
\label{eq:ric4n}
\Ric \om = -\om_t -(1-\delta)\theta-\delta \Theta
\end{equation}
where we set
$\theta =  \sum_{i\in I}a_i\left(\frac{ \ep ^2 |D's_i|^2}{(|s_i|^2+\ep^2)^2}+\frac{\ep^2 \Theta_i}{|s|^2+\ep^2}\right) $, and $\Theta=\sum_{i\in I}a_i \Theta_i$.\\

The Kähler form $\om$ induces a genuine hermitian metric $h$ on $T_X$ and therefore also on the subbundle $F$
over $X\setminus W$. The first bullet in the proof of Proposition \ref{slope1} shows that the degree of $\F$ (against $\{\om\}$ can be computed by integrating $c_1(F,h)\wedge \om^{n-1}$ over $X\setminus W$. And by the inequality relating the Chern curvature of a subbundle to the one of the ambient bundle, we find as in \eqref{prein} that 
\[c_1(\F) \cdotp \{\om\}^{n-1} \le \int_{X\setminus W} \tr_{\rm End} \, (\pr_F (\sharp \Ric \om)_{|F}) \, \om^{n}\]

Choosing a constant $C$ such that $\pm \Theta \le C \om_A$, and remembering that $-\om_t \le t \om_A$, we get:
\[c_1(\F) \cdotp \{\om\}^{n-1} \le \int_{X\setminus W}n(t+C\delta) \om_A\wedge \om^{n-1}+ (1-\delta)\tr_{\rm End} \, (\pr_F (\sharp \theta)_{|F}) \, \om^{n}\]
Using the same arguments as in the previous proof, we obtain
\[c_1(\F) \cdotp \{\om\}^{n-1} \le \left[n(t+C\delta) A+ (1-\delta) D\right] \cdotp (\pi^*H+tA)^{n-1}+ C\sum_i \int_X \frac{\ep^2}{|s_i|^2+\ep^2} \om_A \wedge \om^{n-1}   \]
By an easy adaptation of Lemma \ref{lem2} to our situation, we find that making $\ep$ go to $0$
yields 
\[c_1(\F) \cdotp (\pi^*H+tA)^{n-1} \le \left[n(t+C\delta) A+ (1-\delta) D\right] \cdotp (\pi^*H+tA)^{n-1}\]
As $D$ is $\pi$-exceptional, it suffices to make $t$ and $\delta$ going to zero to obtain
\[c_1(\F) \cdotp (\pi^*H)^{n-1} \le 0\]
which ends the proof in the case where $K_X$ is nef.\\

The case where $-K_X$ is nef is very similar. Again, it is enough to show that every coherent subsheaf $\F \subset \Omega_X^1$ has non-positive slope with respect to any "polarization" $\om^{n-1}$ pulled-back from $Y$. 
We solve the same Monge-Ampère equation but now the volume form $dV_t$ satisfies $\Ric dV_t = \om_t-\sum a_i \Theta_i$, where $\om_t\in c_1(-\pi^*K_Y)$ satisfies $\om_t \ge -t \om_A$. We get a metric $\om$ satisfying
$\Ric \om = \om_t -(1-\delta)\theta-\delta \Theta$. Now, $\Theta(\Omega_X^1)= - \Theta(T_X)^*$, so that $\tr_{\om} \Theta(\Omega_X^1) = -\sharp \Ric$. We can run the same computations as above now, and get the non-positivity of the 
slope of $\F$. 
\end{proof}

We can now move on to the proof of the polystability of the tangent sheaf of singular Calabi-Yau varieties:

\begin{proof}[Proof of Corollary \ref{cy}]
Let us start with a "polarization" $\om_Y$, and take the notations of the proof above where we work on a resolution $\pi:X\to Y$ of the Calabi-Yau variety $Y$, and call $E$ the exceptional divisor of $\pi$. As the slope of $\mathscr T_Y$ vanishes, Theorem \ref{thm:gsp} gives the $\om$-semistability of $\mathscr T_Y$ already. Now, suppose that there exists a reflexive subsheaf $\F\subset \mathscr T_Y$ whose slope with respect to $\om_Y$ vanishes too. As in the case $K_Y$ ample, Step 4, one chooses $\F$ of minimal rank, and it suffices to find an holomorphic complement of $\F$ inside $\mathscr T_Y$. Therefore we are reduced to showing that one can complement over $X\setminus E$ any reflexive subsheaf $\mathscr G \subset \mathscr T_X$  whose slope with respect to $\pi^*\om$ vanishes. 

So we run the proof of Theorem \ref{thm:gsp} above, and as $K_Y$ is numerically trivial, one can choose $\om_t:=0$ from the very beginning (and $\delta=0$). So we end up with a family of Kähler metrics $\om:=\om_{t,\ep}\in c_1(\pi^*H+tA)$ solving $\Ric \om =\theta_{\ep}$ where $\theta_{\ep}=  \sum_{i\in I}a_i\left(\frac{ \ep ^2 |D's_i|^2}{(|s_i|^2+\ep^2)^2}+\frac{\ep^2 \Theta_i}{|s|^2+\ep^2}\right)$ is an smooth approximant of the current of integration along $D$, $D$ being defined as the exceptional divisor representing $K_X$. 

Let $\beta_{t,\ep}$ is the second fundamental form of $F\subset T_X$ with respect to the hermitian metric $h_{t,\ep}$ induced by $\om$ over $X\setminus W$. We know (from \cite{EGZ} e.g.) that $\om_{t,\ep}$ converges in $\mathscr C^{\infty}_{\rm loc}(X\setminus D)$ to a smooth Ricci flat metric $\omi$ inducing a smooth hermitian metric $h_{\infty}$ on $T_X$ with respect to which the second fundamental form of $F$ is $\bi$ the smooth limit of the $\beta_{t,\ep}$. Therefore the computations above combined with Fatou lemma show that
\[\int_{X\setminus(W\cup D)} \tr_{\rm End}(\bi^*\wedge \bi \wedge \omi^{n-1}) = 0\]
i.e. $\bi=0$ so that we find $\mathscr T_X = \G\oplus \G^{\perp}$ over $X\setminus(W\cup D)$. We can push forward this identity to $Y$, which by reflexivity gives us the expected complement of $\F$ inside $\mathscr T_X$.
\end{proof}

\bibliographystyle{smfalpha}
\bibliography{biblio.bib}

\end{document}